\newtheorem{thm}{Theorem}[section]
\newtheorem{cor}[thm]{Corollary}
\newtheorem{lem}[thm]{Lemma}
\theoremstyle{definition}
\theoremstyle{remark}
\newtheorem{rem}[thm]{Remark}
\numberwithin{equation}{section}
\newcommand{\abs}[1]{\left\vert#1\right\vert}
\newcommand{\A}{\mathcal{A}}
\newcommand{\sgn}{\text{sgn}}
\newcommand{\ed}{\end {document}}
\newcommand{\nb}{|\nabla|}
\title[maximum principle for a transport-diffusion model]
{On a generalized maximum principle for a transport-diffusion model with
$\log$-modulated fractional dissipation}
\author[H. Dong]{Hongjie Dong}
\address[H. Dong]{Division of Applied Mathematics, Brown University,
182 George Street, Providence, RI 02912, USA}
\email{Hongjie\_Dong@brown.edu}
\author[D. Li]{Dong Li}
\address[D. Li]{Department of Mathematics, University of British Columbia, Vancouver BC Canada V6T 1Z2}%
\email{mpdongli@gmail.com}
\begin{document}
\begin{abstract}
We consider a transport-diffusion equation of the form $\partial_t
\theta +v \cdot \nabla \theta + \nu \A \theta =0$, where $v$ is a
given time-dependent vector field on $\mathbb R^d$. The operator
$\A$ represents log-modulated fractional dissipation: $\A=\frac
{|\nabla|^{\gamma}}{\log^{\beta}(\lambda+|\nabla|)}$ and the
parameters $\nu\ge 0$, $\beta\ge 0$, $0\le \gamma \le 2$,
$\lambda>1$. We introduce a novel nonlocal decomposition of the
operator $\A$ in terms of a weighted integral of the usual
fractional operators $|\nabla|^{s}$, $0\le s \le \gamma$ plus a
smooth remainder term which corresponds to an $L^1$ kernel. For a
general vector field $v$ (possibly non-divergence-free) we prove a
generalized $L^\infty$ maximum principle of the form $ \|
\theta(t)\|_\infty \le e^{Ct} \| \theta_0 \|_{\infty}$ where the
constant $C=C(\nu,\beta,\gamma)>0$. In the case $\text{div}(v)=0$ the same
inequality holds for $\|\theta(t)\|_p$ with $1\le p \le \infty$. At
the cost of an exponential factor, this extends a recent result of
Hmidi \cite{Hmidi11} to the full regime $d\ge 1$, $0\le \gamma \le
2$ and removes the incompressibility assumption in the $L^\infty$
case.
\end{abstract}

\maketitle

\section{Introduction}

We consider the transport equation with $\log$-modulated fractional
dissipation of the form
\begin{align} \label{e1}
\begin{cases}
\partial_t \theta + v \cdot \nabla \theta + \nu
\A\theta =0,\quad (t,x) \in (0,\infty) \times \mathbb R^d, \\
\A\theta:=\frac { \nb^\gamma} { \log^\beta (\lambda +|\nabla|) } \theta,\\
\theta(0,x)=\theta_0,
\end{cases}
\end{align}
where $\nu\ge 0$, and $v=v(t,x): \; [0,\infty)\times \mathbb R^d \to \mathbb R^d$ is
a given vector field, possibly non-divergence-free. The basic unknown is the scalar function $\theta=\theta(t,x)$ which
is usually termed ``active scalar''.
The operator $\A$ is defined via the Fourier transform
\begin{align*} 
\widehat{\A f}(\xi) = \frac { |\xi|^{\gamma} } { \log^{\beta} (\lambda +|\xi|)} \hat f(\xi),
\qquad \xi \in \mathbb R^d,
\end{align*}
where the parameters $0\le \gamma \le 2$, $\beta\ge 0$, $\lambda>1$. It is termed $\log$-modulated fractional
dissipation since it is the usual fractional Laplacian operator $|\nabla|^{\gamma}$ divided by a
logarithm symbol. Along the way we will also consider
 a variant of the operator $\A$ which is denoted as $\A_1 = |\nabla|^{\gamma}
\log^{-\beta}(\lambda-\Delta)$, i.e.
\begin{align*} 
\widehat{\A_1 f}(\xi) = \frac { |\xi|^{\gamma} } { \log^{\beta}
(\lambda +|\xi|^2)} \hat f(\xi), \qquad \xi \in \mathbb R^d.
\end{align*}
The main objective of this paper is to prove some maximum principles for the operator $\A$ and $\A_1$ in
Lebesgue 
 spaces.

The transport-diffusion model \eqref{e1}  is a natural generalization of several linear and nonlinear fluid equations
such as the two-dimensional surface quasi-geostrophic equations, fractional Burgers equations,
vortex patch models, and Boussinesq systems. See, for instance,
the recent work \cite{Ta09, DDL09, CCS10, Wu11, DKV, DKSV} and references therein.
In these problems, the velocity $v$ is typically related to the active scalar $\theta$ by a constitutive relation
$v= \mathcal T(\theta)$ where $\mathcal T$ could be some singular integral operator or more generally a nonlocal operator.
To obtain local and global wellposedness results for the nonlinear problems, an important first step is to get a priori
$L^p$, $1\le p \le \infty$ estimates of solutions.  Specific to the linear problem \eqref{e1}, one needs to prove
  $L^p$ bounds on the active scalar $\theta$ \textit{independent
of the size of $v$}.  We refer to these types of results as $L^p$
maximum principle estimates. In this respect, the two-dimensional
dissipative surface quasi-geostrophic equations can be regarded as a
(nonlinear) version of \eqref{e1} and they correspond to the case
$\beta=0$ in the operator $\A$. A classical result is due to A.
C\'ordoba and D. C\'ordoba \cite{CC04}  who proved the following
\begin{align*}
\| \theta(t) \|_p \le \| \theta(0) \|_p, \quad \forall\, t\ge0, \,
1\le p\le \infty.
\end{align*}
In a recent article, Hmidi \cite{Hmidi11} initiated the study of
\eqref{e1} and obtained the following important maximum principle:

\begin{thm}[Hmidi \cite{Hmidi11}] \label{thm0}
Let the dimension $d=2,3$ and let $\nu\ge 0$, $0\le \gamma \le 1$,
$\beta \ge 0$, $\lambda \ge e^{\frac{3+2\alpha}{\beta}}$. Assume the
velocity $v$ is divergence-free, i.e. $\nabla \cdot v =0$. Then any
smooth solution of \eqref{e1} satisfies
\begin{align*}
\| \theta (t) \|_p \le \| \theta (0) \|_p, \quad \forall\, t\ge 0,\,
1\le p \le \infty.
\end{align*}

\end{thm}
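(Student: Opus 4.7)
My plan is to derive both parts of the theorem from a single structural fact: under the hypothesis $\lambda\ge e^{(3+2\gamma)/\beta}$, the operator $\mathcal{A}$ admits a L\'evy-type representation
\[
\mathcal{A}f(x) = \mathrm{P.V.}\int_{\mathbb{R}^d}\bigl(f(x)-f(x+y)\bigr)K(y)\,dy
\]
with a non-negative, radially symmetric kernel $K$ (the range $\gamma\le 1$ is convenient here because no drift correction is needed for the principal value to make sense). My route to such a representation is subordination: starting from the Gamma-function identity
\[
\frac{1}{\log^\beta(\lambda+|\xi|)} = \frac{1}{\Gamma(\beta)}\int_0^\infty s^{\beta-1}(\lambda+|\xi|)^{-s}\,ds,
\]
I would express the full symbol as a weighted superposition in $s$ of Bessel-potential-type symbols $|\xi|^\gamma(\lambda+|\xi|)^{-s}$. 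For each fixed $s$ the Fourier inverse of this symbol is explicit (up to Bessel-function asymptotics) and its sign can be analyzed by hand; integrating against $s^{\beta-1}$ then produces the aggregate kernel, and the explicit lower bound on $\lambda$ is what should force the aggregate to be pointwise non-negative.

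With the kernel representation in hand, the $L^\infty$ bound follows by the standard pointwise method. For a smooth, decaying solution, let $t_0 > 0$ and let $x_0$ be a point where $\theta(t_0,\cdot)$ attains its supremum. Then $\nabla\theta(t_0,x_0)=0$ and, directly from the kernel formula, $\mathcal{A}\theta(t_0,x_0)\ge 0$; substitution into \eqref{e1} forces $\partial_t\theta(t_0,x_0)\le 0$, and a symmetric argument applied to $-\theta$ controls the infimum, giving the non-increase of $\|\theta(t)\|_\infty$. For the $L^p$ bound with $1\le p<\infty$ and $\nabla\cdot v = 0$, I would multiply \eqref{e1} by $|\theta|^{p-2}\theta$ and integrate; the transport term vanishes by the divergence-free assumption, and the C\'ordoba--C\'ordoba-type pointwise inequality
\[
p\,|\theta|^{p-2}\theta(x)\,\mathcal{A}\theta(x) \ge \mathcal{A}(|\theta|^p)(x),
\]
which is immediate from the kernel representation and the convexity of $|\cdot|^p$, reduces matters to $\int_{\mathbb{R}^d}\mathcal{A}(|\theta|^p)\,dx\ge 0$. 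This last inequality is free: integrating the kernel representation yields exactly zero by translation invariance in $x$.

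I expect the main technical obstacle to be establishing the non-negativity of $K$. The subordination integral produces a continuous superposition of Bessel-potential-like kernels weighted by $s^{\beta-1}$, and while each individual kernel is classically understood in the large-$s$ regime, its sign in the small-$s$ regime is delicate: the logarithmic factor introduces oscillatory cancellations that can destroy positivity whenever $\lambda$ is not chosen large enough. Obtaining the explicit threshold $\lambda\ge e^{(3+2\gamma)/\beta}$ will almost certainly require splitting the $s$-integral into small-$s$ and large-$s$ regimes and estimating each piece quantitatively, using the fact that for large $\lambda$ a single main term dominates the potentially negative contributions. Once this kernel positivity is secured, the rest of the proof is essentially mechanical, and the dimension restriction $d\in\{2,3\}$ will appear only, if at all, through the sharp constants in the Bessel-kernel asymptotics.
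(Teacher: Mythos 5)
This theorem is not proven in the paper; it is quoted from Hmidi \cite{Hmidi11}, and the paper's discussion following the statement tells us exactly how Hmidi proved it: via the $C_0$-semigroup of contractions generated by the convolution kernels $\widehat{K_t}(\xi)=e^{-t|\xi|^\gamma/\log^\beta(\lambda+|\xi|)}$, with the crucial positivity $K_t\ge 0$ established by Askey's criterion for radial characteristic functions. Your plan takes a genuinely different route --- build a L\'evy-type representation of $\mathcal A$ with non-negative kernel $K$ via the Gamma-function subordination --- and the two are logically linked (by Schoenberg/L\'evy--Khintchine, positivity of all $K_t$ and non-negativity of the L\'evy kernel are equivalent for such symbols), so \emph{if} you could carry out your plan it would indeed yield Hmidi's conclusion. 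But the comparison is instructive: the Gamma identity
\[
\frac{1}{\log^\beta(\lambda+|\xi|)}=\frac{1}{\Gamma(\beta)}\int_0^\infty s^{\beta-1}(\lambda+|\xi|)^{-s}\,ds
\]
is precisely the starting point of the paper's own Lemma~\ref{lem1}, and the authors use it to decompose $\mathcal A$ into a manifestly positive part (a weighted superposition of $|\nabla|^{\gamma-\tau}$'s) plus an $L^1$-bounded remainder $P$ \emph{of unknown sign}. They settle for the weaker conclusion $\|\theta(t)\|_\infty\le e^{Ct}\|\theta_0\|_\infty$ exactly because controlling the sign of the remainder kernel --- which is what you would need for the sharp inequality --- is not achieved by this subordination. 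You are proposing to prove more from the same identity.

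The concrete gap is the step you yourself flag: you assert that the threshold $\lambda\ge e^{(3+2\gamma)/\beta}$ ``should force the aggregate [kernel] to be pointwise non-negative,'' but you do not prove it, and it is not routine. A few specific obstacles. First, the operator $|\nabla|^\gamma(\lambda+|\nabla|)^{-s}$ for a single $s$ is not a Bessel potential and does not have an $L^1$ convolution kernel for $s<\gamma$ (the symbol does not vanish at infinity fast enough), so ``the Fourier inverse of this symbol is explicit and its sign can be analyzed by hand'' needs substantially more care: you are dealing with a hypersingular distribution kernel, not a function, and its sign away from the origin is the real issue. Second, and more decisively, the dimension restriction $d\in\{2,3\}$ cannot be waved away as a constant in Bessel asymptotics: in Hmidi's proof it arises structurally from Askey's criterion, which is a dimension-dependent sufficient condition for a radial function to be positive definite (the admissible exponents degrade as $d$ grows), and indeed the whole point of the present paper is that removing that restriction apparently requires giving up the sharp constant $C=0$. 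A proof along your lines would need to reproduce, in the subordinated-kernel setting, exactly the quantitative dimension- and $\lambda$-dependent positivity estimate that Askey's criterion packages, and nothing in the proposal does that. Until the kernel non-negativity is actually established with the stated threshold, the rest of the argument (the pointwise maximum argument and the C\'ordoba--C\'ordoba inequality) is correctly described but rests on an unproven foundation.
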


To prove Theorem \ref{thm0}, Hmidi used the theory of
$C_0$-semigroup of contractions on $L^p$ ($1<p<\infty$) for the
family of convolution kernels $(K_t)_{t\ge 0}$ defined by
\begin{align*}
\widehat{K_t}(\xi) =e ^{-t \frac
{|\xi|^{\gamma}}{\log^{\beta}(\lambda+|\xi|)}}.
\end{align*}
The key step is to get the positivity of the kernel $K_t$. For this
purpose, Hmidi used the Askey's criterion for characteristic
functions \cite{Askey}.  The restrictions on the dimension $d$ and
the parameters $(\gamma,\beta, \lambda)$ are mainly due to the use
of this criterion. Hmidi conjectured that the maximum principle
should hold for all dimensions $d\ge 1$ and the full range $0\le
\gamma \le 2$ and $\beta \ge 0$. The purpose of this paper is to
give an affirmative answer to this question at the cost of a
harmless exponential factor.

\begin{thm}[Generalized maximum principle, $L^\infty$ case] \label{thm1}
Let $\nu\ge 0$, $d\ge 1$, $0\le \gamma \le 2$ and $\beta\ge 0$,
$\lambda > 1$.
Assume $\theta=\theta(t,x)$ is a smooth solution of
\eqref{e1}  which decays at spatial infinity, i.e., for any fixed $t\ge 0$,
\begin{align}
\lim_{|x|\to \infty} \theta(t,x) = 0, \label{decay_cond}
\end{align}
Then we have
\begin{align}
\| \theta(t) \|_{\infty} \le e^{Ct} \| \theta_0 \|_{\infty},\quad \forall\, t \ge 0, \label{emax_1}
\end{align}
where $C>0$ is a constant depending only on $(\nu,d,\gamma,\beta, \lambda)$.

\end{thm}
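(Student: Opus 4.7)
My plan is to combine a structural decomposition of the symbol of $\A$ with the classical pointwise maximum principle for fractional Laplacians, and to close the loop via an envelope/Grönwall argument. The cornerstone will be a nonlocal decomposition of the form
\begin{align*}
\frac{|\xi|^{\gamma}}{\log^{\beta}(\lambda+|\xi|)}
= \int_{0}^{\gamma} w(s)\, |\xi|^{s}\,ds + m_R(\xi),
\end{align*}
where $w\colon [0,\gamma]\to[0,\infty)$ is integrable and $m_R$ is the Fourier transform of an $L^1(\mathbb{R}^d)$ function (hence an $L^\infty$-bounded multiplier). A natural candidate is $w(s) \propto (\gamma-s)^{\beta-1}$, since the substitution $|\xi|=e^u$ turns the weighted integral into a Laplace transform whose leading asymptotic, by a Watson-type calculation, is $e^{\gamma u}/u^{\beta}$---exactly matching the symbol of $\A$ as $|\xi|\to\infty$.

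Granting such a decomposition, at the operator level
\begin{align*}
\A \theta(t,x) = \int_0^\gamma w(s)\,|\nabla|^{s}\theta(t,x)\,ds + R\theta(t,x),
\end{align*}
and the proof of \eqref{emax_1} reduces to a pointwise argument at the spatial extrema of $\theta$. By \eqref{decay_cond} and smoothness, for each fixed $t$ the function $\theta(t,\cdot)$ attains its supremum at some point $x_t$. There $\nabla\theta(t,x_t)=0$, so the transport term $v\cdot\nabla\theta$ vanishes regardless of whether $v$ is divergence-free---this is precisely why the incompressibility hypothesis can be dropped in the $L^\infty$ setting. Moreover the C\'ordoba--C\'ordoba pointwise inequality gives $|\nabla|^{s}\theta(t,x_t)\ge 0$ for every $0\le s\le 2$ (the range $s\in(0,2)$ follows from the singular-integral representation of $|\nabla|^s$, $s=2$ is the classical Laplacian maximum principle, and $s=0$ is trivial after reducing to the positive-max case). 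Evaluating \eqref{e1} at $x_t$ therefore yields
\begin{align*}
\partial_t\theta(t,x_t)
= -\nu\int_0^\gamma w(s)\,|\nabla|^{s}\theta(t,x_t)\,ds - \nu R\theta(t,x_t)
\le \nu\,\|R\|_{L^\infty\to L^\infty}\,\|\theta(t)\|_\infty,
\end{align*}
with a symmetric estimate at a spatial minimum of $\theta$.

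Applying the Danskin envelope principle to $M(t):=\|\theta(t)\|_\infty = \max\bigl(\sup_x\theta(t,\cdot),\,\sup_x(-\theta(t,\cdot))\bigr)$ converts the two one-sided pointwise bounds into the Dini inequality $D^+ M(t)\le C M(t)$ with $C=\nu\|R\|_{L^\infty\to L^\infty}$, and Grönwall's lemma immediately delivers \eqref{emax_1}.

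The main obstacle I anticipate lies entirely in constructing the decomposition and, more precisely, in proving that $m_R$ produces an $L^1$ kernel. The choice of $w$ is essentially forced by matching asymptotics, but showing that the error is integrable in physical space requires quantitative control over enough derivatives of $m_R$---one must track the correction terms in the Laplace asymptotic uniformly in $\xi$. The mild non-smoothness of the individual symbols $|\xi|^s$ near the origin is an additional nuisance, which I would handle via a Littlewood--Paley cut-off at low frequencies, absorbing the low-frequency portion of $\A$ directly into $R$ (where it acts as a bounded multiplier on $L^\infty$ by Bernstein's lemma). Once this analytic step is in place, the remainder of the proof is a routine application of the fractional maximum-principle toolkit.
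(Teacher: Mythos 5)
Your overall architecture mirrors the paper's closely: the paper proves exactly the decomposition you propose, with weight $w(s)\propto(\gamma-s)^{\beta-1}$ obtained from the Gamma-function identity $\log^{-\beta}(\lambda+|\xi|)=\Gamma(\beta)^{-1}\int_0^\infty\tau^{\beta-1}(\lambda+|\xi|)^{-\tau}\,d\tau$, and then runs a pointwise maximum argument using $|\nabla|^s\theta(x_0)\ge 0$ at a maximum point (the paper uses a contradiction argument with $f=e^{-\lambda_1 t}\theta$ rather than Dini/Gr\"onwall, a cosmetic difference). You are also right that the transport term drops out at the extremum without any incompressibility assumption.

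However, there is a genuine gap in the regime $1<\gamma\le 2$, and it is precisely where you wave at ``the main obstacle.'' Your proposal takes for granted that the remainder $m_R$ is the Fourier transform of an $L^1$ function, but for the operator $\A=|\nabla|^\gamma\log^{-\beta}(\lambda+|\nabla|)$ with $\gamma>1$ this is \emph{false}: the error in replacing $(\lambda+|\xi|)^{\gamma-\tau}$ by $|\xi|^{\gamma-\tau}$ contributes, for small $\tau$, a term of order $\lambda\tau|\xi|^{\gamma-\tau-1}$, which is an unbounded symbol and not an $L^1$ kernel. Your proposed mitigation (a low-frequency Littlewood--Paley cut-off plus Bernstein) attacks the wrong end --- the trouble is at high frequency. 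The paper's actual decomposition for $1<\gamma\le 2$ (Lemma~\ref{lem1a}) therefore keeps a first-order \emph{negative} fractional correction $-\lambda\tau|\nabla|^{\gamma-\tau-1}$ alongside the leading $|\nabla|^{\gamma-\tau}$, and the key extra ingredient is Lemma~\ref{lem_11}: a quantitative one-sided maximum-principle estimate for mixed operators $|\nabla|^{s_2}-C_1|\nabla|^{s_1}$ with $s_1<s_2$, obtained by splitting the singular-integral representation at a carefully chosen radius $|y-x_0|\sim (s_2(2-s_2))^{1/(s_2-s_1)}$ so that the leading term dominates the negative one near the singularity, while the tail is controlled by $\|g\|_\infty$. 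Without this (or an equivalent substitute), your argument covers only $0\le\gamma\le1$ for $\A$ (and all of $0\le\gamma\le2$ for the variant $\A_1=|\nabla|^\gamma\log^{-\beta}(\lambda-\Delta)$, where the change of variable $|\xi|\mapsto|\xi|^2$ keeps the relevant exponent $\gamma/2$ in $[0,1]$), but does not close the case $1<\gamma\le 2$ stated in the theorem.
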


\begin{rem}
The same result holds if we replace the dissipation operator $\A$ by $\A_1$ in \eqref{e1}.
As we shall see in Section 2, the proof for $\A_1$ case is actually simpler.
The decay condition \eqref{decay_cond} is fairly weak as most smooth solutions to these type
of fluid equations typically belong to the Sobolev space $C_t^0 H_x^s$ which can easily imply
\eqref{decay_cond}. We should also stress that we do not assume any divergence-free
condition on $v$ in Theorem \ref{thm1}. This can have applications for compressible fluid
equations.
\end{rem}

To prove Theorem \ref{thm1}, we shall use a completely new idea
which avoids the use of Askey's criterion. Namely we introduce a
novel nonlocal decomposition of the operator $\A$ (see Section
\ref{sec2} for more details) in terms of a weighted integral of the
usual fractional operators $|\nabla|^{s}$, $0\le s \le \gamma$ plus
a smooth remainder term which corresponds to an $L^1$ kernel. Thanks
to this new decomposition, we shall only need to appeal to the
classic maximum principle for the fractional Laplacian operators.
In a similar vein, one can even consider a weighted integral of a parameterized family of
nonlocal operators each of which obeys a maximum principle. However we shall not pursue
this generality here.

As was already mentioned, Theorem \ref{thm1} deals with the $L^\infty$
norm and no special assumption is needed on the velocity field $v$.
On the other hand for more general $L^p$-norms with $1\le p<\infty$,
the divergence-free condition on the vector field $v$ has to be
assumed, as one needs to calculate the time derivative of the $L^p$ norm and perform
integration by parts.

\begin{thm}[Generalized maximum principle, $L^p$ case] \label{thm2}
Let $\nu\ge 0$, $d\ge 1$, $0\le \gamma \le 2$ and $\beta\ge 0$,
$\lambda >1$. Assume the vector field $v=v(t,x)$ is divergence free, i.e.
$\nabla \cdot v =0$. If $\theta=\theta(t,x)$ is a smooth solution
of \eqref{e1}, then for any $1\le p \le \infty$, we have
\begin{align}
\| \theta(t) \|_{p} \le e^{Ct} \| \theta_0 \|_{p}, \quad \forall\, t>0, \label{emax_2}
\end{align}
where $C>0$ is a constant depending only on $(\nu,d,\gamma,\beta,
\lambda)$.
\end{thm}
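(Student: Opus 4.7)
The plan is to run the standard $L^p$ energy estimate. For $1\le p < \infty$, multiply \eqref{e1} by $|\theta|^{p-2}\theta$ and integrate over $\mathbb{R}^d$. Because $\nabla\cdot v=0$, the transport contribution equals $\tfrac{1}{p}\int v\cdot\nabla(|\theta|^p)\,dx$, which vanishes after an integration by parts, and one is left with
$$\frac{1}{p}\frac{d}{dt}\|\theta\|_p^p = -\nu \int |\theta|^{p-2}\theta \cdot \A\theta\,dx.$$

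The key input is the novel decomposition of $\A$ built in Section \ref{sec2}, which schematically takes the form $\A f = \int_0^\gamma w(s)\,|\nabla|^s f\,ds + K\ast f$, where $w\ge 0$ on $[0,\gamma]$ and $K\in L^1(\mathbb{R}^d)$ depends only on $(\gamma,\beta,\lambda)$. Substituting this and applying Fubini produces two pieces. The weighted fractional part contributes $\int_0^\gamma w(s)\bigl(\int |\theta|^{p-2}\theta\cdot|\nabla|^s\theta\,dx\bigr)\,ds$, which is non-negative pointwise in $s$ by the C\'ordoba--C\'ordoba inequality, valid precisely for $0\le s\le 2$ (which contains $[0,\gamma]$); together with the sign $-\nu$ in front, this piece then has the favorable sign and may simply be discarded. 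The remainder contribution $\int |\theta|^{p-2}\theta\,(K\ast\theta)\,dx$ is controlled in absolute value by $\|K\|_1\|\theta\|_p^p$ using H\"older together with Young's convolution inequality.

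Collecting, one obtains $\tfrac{1}{p}\tfrac{d}{dt}\|\theta\|_p^p \le \nu\|K\|_1\|\theta\|_p^p$, that is, $\tfrac{d}{dt}\|\theta\|_p \le \nu\|K\|_1\|\theta\|_p$, with a constant $C:=\nu\|K\|_1$ that is visibly independent of $p$. Gronwall then delivers \eqref{emax_2}. The endpoint $p=\infty$ can be handled either by passing to the limit $p\to\infty$, since $C$ does not depend on $p$, or more directly by invoking Theorem \ref{thm1}, which does not even require the divergence-free assumption.

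The substantive work lies in proving the decomposition, which is the principal novel contribution of the paper and is carried out in Section \ref{sec2}; granted that, the $L^p$ bookkeeping above is entirely routine. The restriction $0\le \gamma\le 2$ enters only through the C\'ordoba--C\'ordoba step. The only minor technical nuisance is justifying differentiation of $\|\theta\|_p^p$ in time and the integrations by parts, which follow from smoothness of $\theta$ together with the mild decay implicit in treating $\theta$ as a smooth solution in a reasonable function class.
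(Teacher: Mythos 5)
Your energy-estimate bookkeeping is fine, and your argument does prove the theorem for the operator $\A_1$ and for $\A$ in the regime $0\le\gamma\le 1$. But there is a genuine gap for $\A$ when $1<\gamma\le 2$, which is precisely where the paper spends its extra effort.

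The schematic decomposition you invoke, $\A f = \int_0^\gamma w(s)\,|\nabla|^s f\,ds + K\ast f$ with $w\ge 0$ and $K\in L^1$, is correct only when $0\le\gamma\le 1$ (Lemma \ref{lem1}). For $1<\gamma\le 2$ the decomposition established in Lemma \ref{lem1a} is
\begin{align*}
\A = C_{\beta}\int_{\gamma-1}^{\gamma}\tau^{\beta-1}|\nabla|^{\gamma-\tau}\,d\tau
+ C_{\beta}\int_0^{\gamma-1}\tau^{\beta-1}\Bigl(|\nabla|^{\gamma-\tau} - \lambda\,\tau\,|\nabla|^{\gamma-\tau-1}\Bigr)\,d\tau + P,
\end{align*}
which contains the \emph{negative} correction $-\lambda\tau|\nabla|^{\gamma-\tau-1}$. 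That term is not an $L^1$ convolution (its symbol grows in $\xi$) and it has the wrong sign, so one cannot simply discard the fractional piece via C\'ordoba--C\'ordoba and absorb the rest by Young. The reason a clean decomposition with non-negative weight is unavailable here is that the analogue of \eqref{402a} with exponent $t-1\in(0,1)$ no longer yields an $L^1$ kernel, forcing a first-order Taylor subtraction. Your argument, as written, would terminate with an uncontrolled term of the form $-\nu C_\beta\lambda\int_0^{\gamma-1}\tau^{\beta}\int(|\nabla|^{\gamma-\tau-1}\theta)|\theta|^{p-1}\sgn(\theta)\,dx\,d\tau$ that has the unfavorable sign. The paper's fix is Lemma \ref{lem15}: for the mixed operator $L=|\nabla|^s-C_1(\gamma-s)|\nabla|^{s-1}$ one shows $\int(Lg)|g|^{p-1}\sgn(g)\,dx \ge -C_1 C_{d,\gamma}\|g\|_p^p$, exploiting that the stronger $|\nabla|^s$ term dominates the weaker $|\nabla|^{s-1}$ term near the singularity in the symmetrized kernel representation, while the far-field tail of $|\nabla|^{s-1}$ is controlled by $\|g\|_p^p$. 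That lemma is the missing ingredient; without it, the favorable-sign step you rely on simply does not apply in the regime $1<\gamma\le 2$. (A smaller inaccuracy: the claim that the restriction $\gamma\le 2$ ``enters only through C\'ordoba--C\'ordoba'' is also off; it is equally essential to the decomposition itself.)
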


\begin{rem}
Both Theorem \ref{thm1} and Theorem \ref{thm2} hold in the periodic boundary condition
case.\footnote{We thank Edriss Titi for this suggestion.} In the periodic setting, the
decay condition \eqref{emax_1} is no longer needed as the periodic domain is compact.
\end{rem}

It is an interesting question whether one can prove the sharp constant is $C=0$ in
both Theorem \ref{thm1} and Theorem \ref{thm2}. We conjecture this is indeed the
case at least for a generic set of parameters.

%
%
%

The rest of this article is organized as follows. In Section 2 we
introduce the nonlocal decomposition for both the operator $\mathcal
A$ and the operator $\mathcal A_1$. In Section 3 we give the proof
for Theorem \ref{thm1} and Theorem \ref{thm2} for the operator
$\mathcal A_1$. The case $0\le \gamma \le 1$ of $\mathcal A$ is also
covered there. In Section 4 we complete the proof of the main
theorems for the operator $\mathcal A$ in the regime $1<\gamma \le
2$.

We conclude the introduction by setting up some

\subsubsection*{Notations}
\begin{itemize}
\item For any two quantities $X$ and $Y$, we denote $X \lesssim Y$ if
$X \le C Y$ for some constant $C>0$. Similarly $X \gtrsim Y$ if $X
\ge CY$ for some $C>0$. We denote $X \sim Y$ if $X\lesssim Y$ and $Y
\lesssim X$. We shall write $X\lesssim_{Z_1,Z_2,\cdots, Z_k} Y$ if
$X \le CY$ and the constant $C$ depends on the quantities
$(Z_1,\cdots, Z_k)$. Similarly we define $\gtrsim_{Z_1,\cdots, Z_k}$
and $\sim_{Z_1,\cdots,Z_k}$.

\item For any $f$ on $\mathbb R^d$, we denote the Fourier transform of
$f$ has
\begin{align*}
(\mathcal F f)(\xi) = \hat f (\xi) = \int_{\mathbb R^d} f(x) e^{-i \xi \cdot x}\,dx.
\end{align*}
The inverse Fourier transform of any $g$  is given by
\begin{align*}
(\mathcal F^{-1} g)(x) = \frac 1 {(2\pi)^d} \int_{\mathbb R^d}  g(\xi) e^{i x \cdot \xi} \,d\xi.
\end{align*}

\item For any real number $x$, the sign function $\sgn (x)$ is defined as follows
\begin{align*}
\text{sgn}(x)=
\begin{cases}
-1, \quad \text{if $x<0$}, \\
0, \quad \text{if $x=0$}, \\
1, \quad \text{if $x>0$}.
\end{cases}
\end{align*}
For any complex $z$ with $\text{Re}(z)>0$, the Gamma function $\Gamma(z)$ is given by the expression
\begin{align*}
\Gamma(z) = \int_0^{\infty} t^{z-1} e^{-t}\, dt.
\end{align*}

\item
 We will also occasionally need to use the
Littlewood-Paley frequency projection operators. Let $\varphi(\xi)$
be a smooth bump function supported in the ball $|\xi| \leq 2$ and
equal to one on the ball $|\xi| \leq 1$. For each dyadic number $N
\in 2^{\mathbb Z}$ we define the Littlewood-Paley operators
\begin{align*}
\widehat{P_{\leq N}f}(\xi) &:=  \varphi(\xi/N)\hat f (\xi), \notag\\
\widehat{P_{> N}f}(\xi) &:=  [1-\varphi(\xi/N)]\hat f (\xi), \notag\\
\widehat{P_N f}(\xi) &:=  [\varphi(\xi/N) - \varphi (2 \xi /N)] \hat
f (\xi). 
\end{align*}
Similarly we can define $P_{<N}$, $P_{\geq N}$, and $P_{M < \cdot
\leq N} := P_{\leq N} - P_{\leq M}$, whenever $M$ and $N$ are dyadic
numbers.

\end{itemize}

\subsection*{Acknowledgements}
H. Dong was partially supported by the NSF under agreements DMS-0800129 and DMS-1056737.

\section{The nonlocal decomposition} \label{sec2}

We start with the following lemma which establishes the nonlocal decomposition
of the log-modulated fractional dissipation operator $\frac{ \nb^{\gamma} } { \log^{\beta} ( \lambda+\nb)}$
 in the regime $0\le \gamma \le 1$. One should notice the subtle difference between this operator and
 the operator
 $\A_1$ in the logarithmic term. By a simple change of variable $|\xi| \to |\xi|^2$, the decomposition
 of the operator $\A_1$ is addressed in the next corollary.
 After that we establish the decomposition for the operator $\A$ in the regime $1<\gamma\le 2$.
 The proof will be more involved due to certain first order negative corrections.

\begin{lem}[Nonlocal decomposition, case $0\le \gamma\le 1$]
\label{lem1}
Let  $d\ge 1$, $0\le \gamma \le 1$ and $\beta> 0$, $\lambda >1$.
 Then we have the
decomposition:
\begin{align} \label{e200}
\frac { \nb^{\gamma}} {\log^{\beta}(\lambda+|\nabla|)} = C_{\beta} \int_0^{\gamma}
\tau^{\beta-1} |\nabla|^{ {\gamma} -\tau} \,d\tau + P,
\end{align}
where $P$ is a smooth Fourier multiplier which maps $L^p$ to $L^p$ for all $1\le p \le +\infty$.
More precisely, for any function $f$
\begin{align*}
(Pf)(x) = (K*f)(x) = \int K(x-y) f(y) \,dy,
\end{align*}
and $\| K\|_{L_x^1} \le Const$.
\end{lem}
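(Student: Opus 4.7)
The plan is to derive the decomposition from the Gamma-function identity
$$
a^{-\beta} = \frac{1}{\Gamma(\beta)}\int_0^{\infty} t^{\beta-1} e^{-at}\,dt, \qquad a>0,\;\beta>0.
$$
Since $\lambda>1$ ensures $\log(\lambda+|\xi|)>0$ for every $\xi\in\mathbb{R}^d$, substituting $a=\log(\lambda+|\xi|)$ yields the Fourier-side identity
$$
\frac{1}{\log^{\beta}(\lambda+|\xi|)} = \frac{1}{\Gamma(\beta)}\int_0^{\infty} t^{\beta-1}(\lambda+|\xi|)^{-t}\,dt.
$$
Multiplying by $|\xi|^{\gamma}$ and splitting the $t$-integral at $\gamma$, the principal piece $[0,\gamma]$ (with the approximation $(\lambda+|\xi|)^{-t}\approx |\xi|^{-t}$) reproduces exactly the advertised main term with $C_\beta=1/\Gamma(\beta)$, leaving a remainder symbol $\hat P = \hat P_1 + \hat P_2$ with
$$
\hat P_1(\xi) = \frac{1}{\Gamma(\beta)}\int_0^{\gamma} t^{\beta-1}\!\left[\frac{|\xi|^{\gamma}}{(\lambda+|\xi|)^{t}}-|\xi|^{\gamma-t}\right]dt, \quad
\hat P_2(\xi) = \frac{1}{\Gamma(\beta)}\int_{\gamma}^{\infty} t^{\beta-1}\frac{|\xi|^{\gamma}}{(\lambda+|\xi|)^{t}}\,dt.
$$

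The key structural tool for the $L^1$-kernel bound is the subordination formula
$$
(\lambda+|\xi|)^{-t} = \frac{1}{\Gamma(t)}\int_0^{\infty} s^{t-1} e^{-s\lambda} e^{-s|\xi|}\,ds,\qquad t>0,
$$
which exhibits $(\lambda+\nb)^{-t}$ as a positive superposition of Poisson operators $e^{-s\nb}$; since the Poisson kernel is a positive $L^1$ function of unit mass, this gives $\|(\lambda+\nb)^{-t}\|_{L^{1}\to L^{1}}\le \lambda^{-t}$ with a positive integrable kernel. For $\hat P_2$, the factor $(|\xi|/(\lambda+|\xi|))^{\gamma}$ expands via an absolutely convergent binomial series (this is where $\gamma\le 1$ first enters) into a finite signed-measure kernel of bounded total variation; composing with $(\lambda+\nb)^{-(t-\gamma)}$ then shows $\nb^{\gamma}(\lambda+\nb)^{-t}$ is a convolution with an $L^1$ kernel of norm $\lesssim \lambda^{-(t-\gamma)}$, and integration against $t^{\beta-1}$ over $[\gamma,\infty)$ converges. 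For $\hat P_1$, I would next invoke the exact cancellation identity
$$
(\lambda+|\xi|)^{-t}-|\xi|^{-t} = -t\int_0^{\lambda}(u+|\xi|)^{-(t+1)}\,du,
$$
which recasts $\hat P_1$ as a double integral of symbols $|\xi|^\gamma(u+|\xi|)^{-(t+1)}$ of the same form as in $\hat P_2$ (with shift $u$ in place of $\lambda$, and exponent $t+1>\gamma$).

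The main obstacle is the endpoint bookkeeping in $\hat P_1$: a term-by-term estimate of the per-symbol $L^1$-kernel norm $\lesssim u^{\gamma-t-1}$ produces a logarithmically divergent double integral at the corner $(t,u)=(\gamma,0^+)$, so the bound cannot be closed by the triangle inequality alone. The resolution is to use the cancellation \emph{jointly} with the extra factor $t$ in the numerator---for example, by splitting the $u$-integral dyadically around $u\sim 1/|\xi|$, or by passing to position space through the Poisson representation and performing Fubini on the resulting explicit nonnegative kernels so that the kernel of $P_1$ is realized as a convergent superposition of positive pieces. This quantitative endpoint analysis is the technical heart of the argument, and it is exactly where the restriction $0\le\gamma\le 1$ is used, which foreshadows the separate, more delicate treatment required in Section 4 for the regime $1<\gamma\le 2$.
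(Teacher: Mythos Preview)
Your overall strategy---the Gamma identity for $\log^{-\beta}$, multiplication by the numerator, and a split of the $t$-integral at $t=\gamma$---is exactly the paper's. The gap is in the handling of $\hat P_1$, and the remedies you propose do not close it. The divergence you correctly diagnose occurs at the endpoint $t\to\gamma^-$ (where the per-symbol bound behaves like $(\gamma-t)^{-1}$), \emph{not} at $t\to 0$; hence the ``extra factor $t$'' is irrelevant there, and neither a dyadic decomposition in $u$ nor passing to Poisson kernels manufactures the missing factor of $\gamma-t$. If you unwind your double integral in position space you simply recover the kernel of $|\nabla|^{\gamma-t}\bigl[(|\nabla|/(\lambda+|\nabla|))^{t}-1\bigr]$, whose honest $L^1$ norm really is $\sim(\gamma-t)^{-1}$ as $t\uparrow\gamma$ (the $|\xi|^{\gamma-t}$ factor is nearly the identity but its kernel is a signed measure of total variation $\sim(\gamma-t)^{-1}$), so the triangle-inequality loss is genuine.

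The paper avoids this by a small but decisive algebraic twist: it multiplies the Gamma identity by $(\lambda+|\xi|)^{\gamma}$ rather than by $|\xi|^{\gamma}$. This produces three remainder pieces instead of two, but the analogue of your $\hat P_1$ becomes
\[
C_\beta\int_0^{\gamma}\tau^{\beta-1}\Bigl[(\lambda+|\xi|)^{\gamma-\tau}-|\xi|^{\gamma-\tau}\Bigr]\,d\tau,
\]
where now the exponent $\gamma-\tau\in[0,1]$ is \emph{nonnegative}. For $0<s<1$ one has the exact identity
\[
(\lambda+|\xi|)^{s}-|\xi|^{s}=\int_0^{\lambda} s\,(u+|\xi|)^{s-1}\,du,
\]
and since $s-1<0$ each $(u+|\xi|)^{s-1}$ has a positive $L^1$ kernel of mass $u^{s-1}$ (via the Poisson subordination you already wrote down). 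Integrating in $u$ gives the clean bound
$\bigl\|\mathcal F^{-1}\bigl[(\lambda+|\xi|)^{s}-|\xi|^{s}\bigr]\bigr\|_{L^1}=\lambda^{s}$,
which is \emph{uniform} in $s\in(0,1)$; the $\tau$-integral then converges with no endpoint issue. The extra third piece, $[|\xi|^{\gamma}-(\lambda+|\xi|)^{\gamma}]\log^{-\beta}(\lambda+|\xi|)$, is handled by the same estimate together with the trivial bound $\|\mathcal F^{-1}\log^{-\beta}(\lambda+|\xi|)\|_{L^1}=\log^{-\beta}\lambda$. This is precisely where the restriction $0\le\gamma\le 1$ is used (so that $s=\gamma-\tau<1$ and the exponent $s-1$ is negative); your attribution of the restriction to the binomial expansion of $(|\xi|/(\lambda+|\xi|))^{\gamma}$ is off, since that expansion has absolutely summable coefficients for every $\gamma>0$.
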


\begin{proof}[Proof of Lemma \ref{lem1}]
On the Fourier side the identity \eqref{e200} is equivalent to the following
\begin{align} \label{e201}
\frac { |\xi| ^\gamma} { \log^\beta (\lambda+|\xi|) }  = C_{\beta}
\int_0^{\gamma} \tau^{\beta-1} |\xi|^{ {\gamma}-\tau} \,d\tau +P(\xi).
\end{align}

To show \eqref{e201} we start with the simple identity
\begin{align} \label{log_simple}
\frac 1 { \log^\beta (\lambda+|\xi| )} = \frac 1 {\Gamma (\beta)}
\int_0^{\infty} \tau^{\beta-1} (\lambda+ |\xi|)^{-\tau} \,d\tau.
\end{align}

Hence
\begin{align*}
 \frac {(\lambda+ |\xi|)^{ {\gamma}} } { \log^{\beta} (\lambda+ |\xi|) }
= & {\frac 1 {\Gamma (\beta)}}\int_0^{{\gamma}} \tau^{\beta-1} (\lambda + |\xi|)^{\gamma - \tau} \,d\tau \notag \\
& \qquad + {\frac 1 {\Gamma (\beta)}}\int_{{\gamma} }^\infty
\tau^{\beta-1} (\lambda + |\xi|)^{\gamma - \tau} \,d\tau.
\end{align*}

We then set $C_{\beta} = \frac 1 {\Gamma(\beta)}$ and obtain \eqref{e201} with
\begin{align}
P(\xi) & =  C_{\beta} \int_0^{\gamma} \tau^{\beta-1}
\Bigl( (\lambda+|\xi|)^{{\gamma} -\tau} - |\xi|^{{\gamma}-\tau} \Bigr) \,d\tau  \label{330a} \\
& \qquad +C_{\beta} \int_{\gamma}^\infty \tau^{\beta-1} (\lambda+ |\xi|)^{\gamma -\tau} \,d\tau \label{330b} \\
& \qquad + \frac { |\xi|^{\gamma} - (\lambda+ |\xi|)^{ {\gamma}} } {\log^{\beta} ( \lambda +|\xi|) }.
\label{330c}
\end{align}

It remains for us to show the $L^1$ boundedness of $\mathcal F^{-1} (P)$. We first deal with the
piece \eqref{330a}. By the Fundamental Theorem of Calculus, we have for any $0\le t \le 1$,
\begin{align}
(\lambda+\abs{\xi})^{t} - |\xi|^t = \int_{0}^{\lambda}  t (s+\abs{\xi})^{t-1} \,ds. \label{402a}
\end{align}
If $0<t<1$, then
\begin{align*}
(s+\abs{\xi})^{t-1} = \frac 1 {\Gamma(1-t)} \int_{0}^{\infty} y^{-t} e^{-y (s+|\xi|)} \,dy.
\end{align*}
Since for $y>0$ the Poisson kernel $\mathcal F^{-1} (e^{-y|\xi|} )$ is positive,  it follows easily
 that $\mathcal F^{-1} ( (s+\abs{\xi})^{t-1} )$ is a non-negative function and furthermore,
\begin{align*}
\| \mathcal F^{-1} ( (s+\abs{\xi})^{t-1} ) \|_{L_x^1} = s^{t-1}.
\end{align*}
By \eqref{402a}, we get
\begin{align}
\| \mathcal F^{-1} ( (\lambda+\abs{\xi})^{t} - |\xi|^t )  \|_{L_x^1} =\lambda^t. \label{423a}
\end{align}
Plugging the above estimate into \eqref{330a}, we get
\begin{align*}
\| \mathcal F^{-1} \eqref{330a}  \|_{L_x^1} \le C_{\beta}
\int_0^{\gamma} \tau^{\beta-1} \lambda^{\gamma-\tau} \,d\tau
 < \infty,
\end{align*}
which is clearly good for us.

For \eqref{330b}, we just note that for $\tau>\gamma$,
\begin{align*}
\| \mathcal F^{-1} ( (\lambda+|\xi|)^{\gamma-\tau} ) \|_{L_x^1} = \lambda^{\gamma-\tau},
\end{align*}
and hence
\begin{align*}
\| \mathcal F^{-1}  \eqref{330b}  \|_{L_x^1} \le C_{\beta} \int_{\gamma}^{\infty}
\tau^{\beta-1} \lambda^{\gamma-\tau} \,d\tau <+\infty,
\end{align*}
where we used the fact that $\lambda>1$.

Finally we deal with the contribution of \eqref{330c}. By \eqref{log_simple}, it is obvious that
\begin{align*}
\left\| \mathcal F^{-1} ( \frac 1 {\log^{\beta} (\lambda+ |\xi| ) } )
\right\|_{L_x^1} = \frac 1 {\log^{\beta} \lambda}
<\infty,
\end{align*}
since $\lambda>1$. Now in \eqref{330c} we may assume $0<\gamma <1$ (the cases $\gamma=0$ and
$\gamma=1$ are trivial). By \eqref{423a} and Young's inequality, we get
\begin{align*}
\| \mathcal F^{-1} \eqref{330c} \|_{L_x^1} & \le \| \mathcal F^{-1} ( \frac 1 {\log^{\beta} (\lambda+|\xi|) } )
\|_{L_x^1} \cdot \| \mathcal F^{-1} ( (\lambda+|\xi|)^{\gamma} - |\xi|^{\gamma} ) \|_{L_x^1} \\
& \le \frac 1 {\log^{\beta} \lambda} \cdot \lambda^{\gamma} <\infty.
\end{align*}

\end{proof}

By a simple substitution $|\xi| \to |\xi|^2$, we can deduce the nonlocal decomposition of
the operator $\A_1$ from Lemma \ref{lem1}. Of course, one still needs to check the
$L^1$ boundedness of the error term under such nonlinear substitution.

\begin{cor}[Nonlocal decomposition for the operator $\A_1$]
\label{cor1}
Let  $d\ge 1$, $0\le \gamma \le 2$ and $\beta> 0$, $\lambda >1$.
 Then we have the
decomposition:
\begin{align} \label{e2000}
\frac { \nb^{\gamma}} {\log^{\beta}(\lambda-\Delta)} = C_{\beta} \int_0^{\gamma}
\tau^{\beta-1} |\nabla|^{ {\gamma} -\tau} \,d\tau + P,
\end{align}
where $P$ is a smooth Fourier multiplier which maps $L^p$ to $L^p$ for all $1\le p \le +\infty$.
More precisely, for any function $f$
\begin{align*}
(Pf)(x) = (K*f)(x) = \int_{\mathbb R^d} K(x-y) f(y)\, dy,
\end{align*}
and $\| K\|_{L_x^1} \le Const$.
\end{cor}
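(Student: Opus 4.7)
The plan is to follow the template of Lemma \ref{lem1}, but with the crucial adjustment that $(\lambda+|\xi|)$ is replaced by $(\lambda+|\xi|^2)$ throughout. Starting from the subordination identity
$$\frac{1}{\log^\beta(\lambda+|\xi|^2)} = \frac{1}{\Gamma(\beta)} \int_0^\infty \tau^{\beta-1} (\lambda+|\xi|^2)^{-\tau}\,d\tau,$$
multiply through by $(\lambda+|\xi|^2)^{\gamma/2}$ and split the $\tau$-integral at $\tau = \gamma/2$. In the low range $\tau \in [0,\gamma/2]$ perform the change of variable $\tau = s/2$; this rescales the measure by $2^{-\beta}$ and turns the integrand into $(\lambda+|\xi|^2)^{(\gamma-s)/2}$ with $s \in [0,\gamma]$. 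Finally, add and subtract $|\xi|^{\gamma-s}$ inside that integral, and also adjust from $(\lambda+|\xi|^2)^{\gamma/2}$ back to $|\xi|^\gamma$ in the numerator. This produces \eqref{e2000} with $C_\beta = \tfrac{1}{2^\beta\Gamma(\beta)}$ and $P$ expressed as a sum of three error terms directly analogous to \eqref{330a}, \eqref{330b}, \eqref{330c}.

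The workhorse estimate for the $L^1$ analysis of these error pieces is the Bessel bound
$$\bigl\| \mathcal F^{-1}\bigl((\lambda+|\xi|^2)^{-s}\bigr)\bigr\|_{L^1} = \lambda^{-s}, \qquad s>0,$$
which follows from the Gaussian representation $(\lambda+|\xi|^2)^{-s} = \tfrac{1}{\Gamma(s)}\int_0^\infty t^{s-1} e^{-\lambda t} e^{-t|\xi|^2}\,dt$ and the fact that Gaussians have unit $L^1$ mass. Combined with the Fundamental Theorem of Calculus representation
$$(\lambda+|\xi|^2)^{t} - |\xi|^{2t} = \int_0^\lambda t\,(u+|\xi|^2)^{t-1}\,du, \qquad 0\le t < 1,$$
this at once yields $\|\mathcal F^{-1}((\lambda+|\xi|^2)^t - |\xi|^{2t})\|_{L^1} \le \lambda^t$. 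With these two ingredients, each of the three error pieces in $P(\xi)$ is bounded in $L^1$ by an explicit integral in $s$ (or $\tau$) against $s^{\beta-1}\lambda^{(\gamma-s)/2}$, which converges because $\beta>0$ and $\lambda>1$; for the third piece one further invokes Young's inequality together with the direct bound $\|\mathcal F^{-1}(1/\log^\beta(\lambda+|\xi|^2))\|_{L^1} \le (\log\lambda)^{-\beta}$, also obtained from the subordination identity.

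The main obstacle lies in the endpoint case $\gamma=2$, where the FTC identity formally collapses at $t=1$: one has $(\lambda+|\xi|^2)-|\xi|^2=\lambda$, a constant whose inverse Fourier transform is a Dirac mass rather than an $L^1$ function. For the analog of \eqref{330a} this degeneracy is confined to the single point $s=\gamma$ (equivalently $\tau=0$) inside the $s$-integration and therefore contributes nothing to the integrated $L^1$ bound, since the pointwise estimate $\|\mathcal F^{-1}((\lambda+|\xi|^2)^{(\gamma-s)/2}-|\xi|^{\gamma-s})\|_{L^1}\le \lambda^{(\gamma-s)/2}$ remains valid for all $s>0$. For the analog of \eqref{330c}, where the degeneracy is not averaged out, one simply bypasses the FTC step and uses Young's inequality together with the explicit constant value $|\xi|^2-(\lambda+|\xi|^2)=-\lambda$, absorbing it against the $L^1$ bound on $1/\log^\beta(\lambda+|\xi|^2)$. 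With these modifications, the argument extends over the whole range $0\le\gamma\le 2$, as required by Corollary \ref{cor1}.
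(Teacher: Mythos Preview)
Your proposal is correct and follows essentially the same route as the paper's own proof: substitute $|\xi|\to|\xi|^2$ and $\gamma\to\gamma/2$ in the Lemma~\ref{lem1} derivation, use the Bessel/Gaussian identity in place of the Poisson one to get $\|\mathcal F^{-1}((a+|\xi|^2)^{-s})\|_{L^1}=a^{-s}$, and bound the three error pieces exactly as before. Your explicit change of variable $\tau=s/2$ (yielding $C_\beta=2^{-\beta}/\Gamma(\beta)$) and your careful treatment of the endpoint $\gamma=2$ simply make precise what the paper leaves implicit in the remarks ``the value of the constant $C_\beta$ can be adjusted slightly'' and ``$0\le\gamma/2\le1$''.
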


\begin{proof}[Proof of Corollary \ref{cor1}]
On the Fourier side, the identity
\eqref{e2000} is equivalent to the following (the value of the constant $C_{\beta}$ can
be adjusted slightly)
\begin{align} \label{e2000a}
\frac {|\xi|^{\gamma} } { \log^{\beta} ( \lambda +|\xi|^2) }
= C_{\beta} \int_0^{\frac {\gamma}2} \tau^{\beta-1} (|\xi|^2)^{\frac{\gamma} 2 -\tau} \,d\tau +P(\xi).
\end{align}

By using a similar derivation as in the beginning part of the proof of Lemma \ref{lem1} (see
in particular \eqref{log_simple}--\eqref{330c}, and replace $|\xi|$ by $|\xi|^2$, $\gamma$ by $\gamma/2$),
 we obtain \eqref{e2000a} with
\begin{align*}
P(\xi) &=
C_{\beta} \int_0^{\frac {\gamma}2} \tau^{\beta-1}
\Bigl( (\lambda+|\xi|^2)^{\frac {\gamma}2 -\tau} - (|\xi|^2)^{\frac{\gamma}2-\tau} \Bigr)\, d\tau  \\
& \qquad +C_{\beta} \int_{\frac {\gamma}2 }^\infty \tau^{\beta-1} (\lambda+ |\xi|^2)^{\frac {\gamma}2 -\tau}
\,d\tau \\
& \qquad + \frac { (|\xi|^2)^{\frac{\gamma}2} - (\lambda+ |\xi|^2)^{ \frac {\gamma}2} }
{\log^{\beta} ( \lambda +|\xi|^2) }.
\end{align*}
Note that $0\le \frac {\gamma}2 \le 1$ and the fact $\| \mathcal F^{-1} (a+|\xi|^2)^{-s}\|_{L_x^1}
=a^{-s}$ for any $s>0$ and $a>0$.
By using a similar analysis as in the proof of Lemma \ref{lem1}, it is then not difficult to check
that $\mathcal F^{-1} (P)$ is an $L^1$ bounded kernel.
\end{proof}

We now consider the more involved $1<\gamma \le 2$ case for the operator $\A$. One should
compare the decomposition \eqref{e300} with \eqref{e200}.

\begin{lem}[Nonlocal decomposition, case $1< \gamma \le 2$]
\label{lem1a}
Let  $d\ge 1$, $1< \gamma \le 2$ and $\beta> 0$, $\lambda>1$.
 Then we have the
decomposition:
\begin{align} \label{e300}
&\frac { \nb^{\gamma}} {\log^{\beta}(\lambda+|\nabla|)}   \notag \\
=&\;
C_{\beta} \int_{\gamma-1}^{\gamma} \tau^{\beta-1} | \nabla|^{\gamma-\tau} d\tau
+
C_{\beta} \int_0^{\gamma-1}
\tau^{\beta-1} \Bigl( |\nabla|^{ {\gamma} -\tau}
- \lambda \,\tau  |\nabla|^{\gamma-\tau-1} \Bigr)
\,d\tau + P,
\end{align}
where $P$ is a smooth Fourier multiplier which maps $L^p$ to $L^p$ for all $1\le p \le +\infty$.
More precisely, for any function $f$
\begin{align*}
(Pf)(x) = (K*f)(x) = \int K(x-y) f(y)\, dy,
\end{align*}
and $\| K\|_{L_x^1} \le Const$.
\end{lem}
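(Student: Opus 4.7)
The plan is to mirror the proof of Lemma \ref{lem1}, but since $\gamma \in (1,2]$ we must perform a first-order Taylor correction when replacing $(\lambda+|\xi|)^{\gamma-\tau}$ by $|\xi|^{\gamma-\tau}$ for small $\tau$. Working on the Fourier side, we start from
\[
\frac{(\lambda+|\xi|)^\gamma}{\log^\beta(\lambda+|\xi|)} = C_\beta \int_0^\infty \tau^{\beta-1}(\lambda+|\xi|)^{\gamma-\tau}\,d\tau,
\]
and split the $\tau$-integral at $\gamma-1$ and $\gamma$. The tail $\tau > \gamma$ is immediate because $(\lambda+|\xi|)^{\gamma-\tau}$ then has a positive $L^1$ Fourier inverse with norm $\lambda^{\gamma-\tau}$. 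The middle range $\tau \in [\gamma-1,\gamma]$ gives exponents $\gamma-\tau \in [0,1]$ and is handled exactly as in Lemma \ref{lem1} via the identity \eqref{423a}.

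For the new range $\tau \in [0,\gamma-1]$, two successive applications of the fundamental theorem of calculus yield
\[
(\lambda+|\xi|)^{\gamma-\tau} - |\xi|^{\gamma-\tau} - (\gamma-\tau)\lambda |\xi|^{\gamma-\tau-1} = (\gamma-\tau)(\gamma-\tau-1)\int_0^\lambda\!\!\int_0^s (u+|\xi|)^{\gamma-\tau-2}\,du\,ds.
\]
Since $\gamma-\tau-2 \in [-2,-1]$ in this range, each $(u+|\xi|)^{\gamma-\tau-2}$ has a positive Fourier inverse with $L^1$-norm $u^{\gamma-\tau-2}$, and a direct computation bounds the $L^1$ Fourier norm of the remainder above by $\lambda^{\gamma-\tau}$, which is integrable against $\tau^{\beta-1}\,d\tau$. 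Collecting the three regions produces
\[
\frac{(\lambda+|\xi|)^\gamma}{\log^\beta(\lambda+|\xi|)} = C_\beta\!\int_0^\gamma \tau^{\beta-1}|\xi|^{\gamma-\tau}\,d\tau + C_\beta\lambda\!\int_0^{\gamma-1}\tau^{\beta-1}(\gamma-\tau)|\xi|^{\gamma-\tau-1}\,d\tau + \widetilde P_1(\xi),
\]
with $\mathcal F^{-1}\widetilde P_1 \in L^1$.

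To convert the left-hand side into $|\xi|^\gamma/\log^\beta(\lambda+|\xi|)$, we subtract $((\lambda+|\xi|)^\gamma - |\xi|^\gamma)/\log^\beta(\lambda+|\xi|)$. The same double-integral identity (with $\tau=0$) gives $(\lambda+|\xi|)^\gamma - |\xi|^\gamma = \gamma\lambda|\xi|^{\gamma-1} + R(\xi)$, where $R$ has $L^1$ Fourier norm $\lesssim \lambda^\gamma$; the $R/\log^\beta(\lambda+|\xi|)$ contribution is then an $L^1$-kernel by Young's inequality together with \eqref{log_simple}. The leftover piece $\gamma\lambda|\xi|^{\gamma-1}/\log^\beta(\lambda+|\xi|)$ falls directly under Lemma \ref{lem1} since $\gamma-1 \in (0,1]$, and decomposes as $\gamma\lambda C_\beta\int_0^{\gamma-1}\sigma^{\beta-1}|\xi|^{\gamma-1-\sigma}\,d\sigma + \widetilde P_2(\xi)$. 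Renaming $\sigma = \tau$ and using $(\gamma-\tau) - \gamma = -\tau$, the two correction integrals collapse to the single expression $-C_\beta\lambda\int_0^{\gamma-1}\tau^\beta|\xi|^{\gamma-\tau-1}\,d\tau$, and splitting $\int_0^\gamma = \int_0^{\gamma-1} + \int_{\gamma-1}^\gamma$ regroups the pieces into exactly \eqref{e300}.

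The main obstacle is the bookkeeping of the two first-order corrections. The Taylor expansion on $[0,\gamma-1]$ naturally produces the coefficient $\lambda(\gamma-\tau)$, whereas the statement demands $\lambda\tau$; only after the auxiliary application of Lemma \ref{lem1} to $|\xi|^{\gamma-1}/\log^\beta(\lambda+|\xi|)$ and a careful sign bookkeeping do the two terms merge into the clean form of \eqref{e300}. One must also verify that the prefactor $(\gamma-\tau)(\gamma-\tau-1)$ in the double-integral remainder cancels the singularity of the inner iterated integral at the endpoint $\tau = \gamma-1$, which is what guarantees integrability against $\tau^{\beta-1}$ throughout $[0,\gamma-1]$.
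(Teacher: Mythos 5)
Your proof is correct, and it takes a genuinely different route from the paper's. You start from $(\lambda+|\xi|)^{\gamma}/\log^{\beta}(\lambda+|\xi|)$ exactly as in Lemma \ref{lem1}, use a second-order Taylor expansion in the variable $\lambda$ (two applications of the FTC) to peel off the linear correction $(\gamma-\tau)\lambda|\xi|^{\gamma-\tau-1}$ with a uniformly $L^1$-bounded remainder, and then convert the left side to $|\xi|^{\gamma}/\log^{\beta}(\lambda+|\xi|)$ by subtracting $\bigl((\lambda+|\xi|)^{\gamma}-|\xi|^{\gamma}\bigr)/\log^{\beta}(\lambda+|\xi|)$ and invoking Lemma \ref{lem1} applied to the exponent $\gamma-1\in(0,1]$; the algebraic collapse $(\gamma-\tau)-\gamma=-\tau$ then produces the coefficient $\lambda\tau$ appearing in \eqref{e300}. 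The paper instead works directly with $|\xi|^{\gamma}(\lambda+|\xi|)^{-\tau}$, inserts a cutoff $\phi_{>10\lambda}(\xi)$, and expands $(1+\lambda/|\xi|)^{-\tau}$ binomially for large $|\xi|$, reading off the coefficient $-\tau$ immediately; this requires the auxiliary multiplier bounds \eqref{e24a} and the Littlewood--Paley estimate \eqref{e24b}. Your version trades those technical lemmas for one extra subtraction step plus a reuse of Lemma \ref{lem1}, which is arguably cleaner and more self-contained. The key point you correctly flag --- that the prefactor $(\gamma-\tau-1)$ must cancel the divergence of $\int_0^s u^{\gamma-\tau-2}\,du$ as $\tau\uparrow\gamma-1$ --- is precisely what makes the $L^1$ bound uniform in $\tau$, giving the integrability against $\tau^{\beta-1}$.

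One small slip: you write that $\gamma-\tau-2\in[-2,-1]$ for $\tau\in[0,\gamma-1]$, but the correct range is $\gamma-\tau-2\in[-1,\gamma-2]\subset[-1,0]$. This does not affect the argument: the exponent is still nonpositive, so $(u+|\xi|)^{\gamma-\tau-2}$ still has a nonnegative Fourier inverse with mass $u^{\gamma-\tau-2}$ (with the usual degenerate reading as a point mass at the single boundary case $\gamma=2$, $\tau=0$, which is negligible in the $\tau$-integral). You may also want to state explicitly that the total mass bound $\lambda^{\gamma-\tau}$ comes out of the identity $(\gamma-\tau)(\gamma-\tau-1)\int_0^{\lambda}\int_0^s u^{\gamma-\tau-2}\,du\,ds=\lambda^{\gamma-\tau}$, as this makes the cancellation at $\tau=\gamma-1$ visible at a glance.
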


\begin{proof}[Proof of Lemma \ref{lem1a}]
Throughout this proof we shall use the letter $P$ to denote the
symbol of an $L^1 \to L^1$ bounded operator. For the
convenience of notation, we allow the value of $P$ to vary from line
to line. We begin with two elementary estimates. Let $\phi \in
C_c^{\infty} (\mathbb R^d)$ be a radial smooth cut-off function such
that $\phi(x)=1$ for $|x|\le 2$ and $\phi(x)=0$ for $|x|\ge 3$. For
any constant $C>0$ define $\phi_{<C} (x):= \phi( x/C)$ and
$\phi_{>C} (x) = 1- \phi_{<C}(x)$. Then for any $\gamma>0$, $C>0$,
$s\ge 0$, we have the following
\begin{align}
&\| \mathcal F^{-1} \Bigl(  (\frac {|\xi|} {\lambda+|\xi|} )^{\gamma} \Bigr) \|_{L_x^1 \to L_x^1} <\infty, \label{e24a} \\
&\| \mathcal F^{-1} \Bigl( |\xi|^{-s} \phi_{>C}(\xi)  \Bigr) \|_{L_x^1} \le C_1 (1+s)^{d+1} C^{-s}, \label{e24b}
\end{align}
where the constant $C_1$ is independent of $s$.  To prove \eqref{e24a}, one can use a scaling argument to reduce
to the $\lambda=1$ case. The result then follows easily from the binomial expansion of $(|\xi|/(1+|\xi|))^{\gamma}
=(1- \frac 1 {1+|\xi|})^{\gamma}=\sum_{n\ge 0} C_{\gamma, n} (1+|\xi|)^{-n}$,  the $L^1$-boundedness of the operators $(1+|\xi|)^{-n}$
(namely $\| \mathcal F^{-1} ( (1+|\xi|)^{-n} ) \|_{L_x^1\to L_x^1} \le 1$ for any $n\ge 0$),
and the fact that $\sum_{n\ge 0} |C_{\gamma,n}|<+\infty$ (note that $C_{n,\gamma}$ has a definite sign for $n$ sufficiently large).  To prove \eqref{e24b} one can again use scaling to reduce to
the case $C=1$.  For $s \ge 2$ the result is obvious by using integration by parts. For $0<s\le 2$,  we note that
\begin{align*}
\|  \mathcal F^{-1} \Bigl( (1+|\xi|)^{-s} \phi_{>C}(\xi)  \Bigr) \|_{L_x^1} \lesssim 1,
\end{align*}
and on the support of $\phi_{>C}(\xi)$,
\begin{align*}
|\xi|^{-s} - (1+|\xi|)^{-s} \sim |\xi|^{-s-1}.
\end{align*}
One can then
use the Littlewood-Paley operators  to bound (since we are summing over $N\ge N_0$ the convergence in $L^1$ is
of no problem):
\begin{align*}
\|\mathcal F^{-1} ( |\xi|^{-s} \phi_{>1} ) \|_{L_x^1\to L_x^1} &\; \lesssim 1+ \|P_{\gtrsim 1} (  (|\nabla|^{-s} - (1+|\nabla|)^{-s} ) \delta_0 ) \|_{L_x^1} \notag \\
& \lesssim\; 1+ \sum_{\text{N dyadic}: N\gtrsim 1} N^{-s-1}   \| P_N   \delta_0 \|_{L_x^1}   \notag \\
& \le Const.
\end{align*}
Observe that the implied constants are uniform in $s$
since $0<s\le 2$. This settles \eqref{e24b}.

By \eqref{log_simple}, we write
\begin{align}
\Gamma(\beta) \frac {|\xi|^{\gamma}} {\log^{\beta} (\lambda+ |\xi|) } &= \int_0^{\gamma-1} \tau^{\beta-1} |\xi|^{\gamma}
(\lambda+|\xi|)^{-\tau} \,d\tau \label{136a} \\
& \; + \int_{\gamma-1}^{\gamma} \tau^{\beta-1} |\xi|^{\gamma}
(\lambda+|\xi|)^{-\tau} \,d\tau \label{136b} \\
&\;+ \int_{\gamma}^{\infty} \tau^{\beta-1} |\xi|^{\gamma}
(\lambda+|\xi|)^{-\tau} \,d\tau. \label{136c}
\end{align}

We first deal with \eqref{136c}. Rewrite
\begin{align*}
|\xi|^{\gamma}
(\lambda+|\xi|)^{-\tau} = (\frac{|\xi} { \lambda+|\xi|})^{\gamma} \cdot (\lambda+|\xi|)^{\gamma-\tau}.
\end{align*}
By  using \eqref{e24a} we obtain
\begin{align*}
\| \mathcal F^{-1}  \eqref{136c} \|_{L_x^1} \lesssim \int_{\gamma}^\infty  \tau^{\beta-1} \lambda^{\gamma-\tau} \,d\tau <\infty.
\end{align*}

Next we turn to \eqref{136b}.  By inserting a smooth cut-off function $\phi_{>10\lambda} (\xi)$, We have
\begin{align}
\eqref{136b} &= \int_{\gamma-1}^{\gamma}
 \tau^{\beta-1}
  |\xi|^{\gamma-\tau} (1+ \frac {\lambda}{|\xi|} )^{-\tau} \,d\tau \phi_{>10\lambda} (\xi)+P \notag \\
&=\int_{\gamma-1}^{\gamma} \tau^{\beta-1} |\xi|^{\gamma-\tau}  \,d\tau \phi_{>10\lambda} (\xi) + P \notag \\
& \qquad + \int_{\gamma-1}^{\gamma} \tau^{\beta-1} |\xi|^{\gamma-\tau} ( (1+\frac{\lambda} {|\xi|} )^{-\tau} -1 ) \,d\tau
\phi_{>10 \lambda} (\xi) . \label{t11a}
\end{align}

  On the other hand, by using the binomial expansion of the function $(1+t)^{-s} =\sum_{n\ge 0} C_{n,s} t^n $ and the estimate \eqref{e24b},
  it is not difficult to check that
  \begin{align*}
  \| \mathcal F^{-1} \eqref{t11a} \|_{L_x^1} & \lesssim  \int_{\gamma-1}^{\gamma} \tau^{\beta-1}
 \Bigl( \sum_{n\ge 1} |C_{n,\tau}| \cdot \lambda^n \cdot (10\lambda)^{-(n+\tau-\gamma)} \cdot (1+n+\tau-\gamma)^{d+1}
 \Bigr)\,d\tau
         \notag\\
         & <+\infty.
  \end{align*}

  Hence we have proved
  \begin{align*}
  \eqref{136b} = \int_{\gamma-1}^{\gamma} \tau^{\beta-1} |\xi|^{\gamma-\tau} \,d\tau + P.
  \end{align*}

  We turn now to the final piece \eqref{136a}. The main idea is similar to that of \eqref{136b}. We again insert a smooth cut-off
  $\phi_{>10\lambda}(\xi)$ and observe that when $|\xi| \ge 10\lambda$, we have
  \begin{align*}
  (1+\frac {\lambda} {|\xi| } )^{-\tau} = 1- \tau \frac {\lambda} {|\xi|} + \sum_{n \ge 2} C_{n,\gamma} ( \frac {\lambda}{|\xi|} )^n.
  \end{align*}
  Note that $0\le \tau \le \gamma-1$ and we have to keep terms up to the linear term. Then clearly
  \begin{align*}
  \eqref{136a} = \int_0^{\gamma-1} \tau^{\beta-1} ( |\xi|^{\gamma-\tau} - \tau \lambda |\xi|^{\gamma-\tau-1} ) \,d\tau +P.
  \end{align*}

  The desired decomposition  \eqref{e300} now follows.

\end{proof}

\section{Proof of Theorem \ref{thm1} and Theorem \ref{thm2} for the operator $\A_1$}
In this section we give the proofs of Theorems \ref{thm1} and \ref{thm2} for the operator $\A_1$.
The proofs for the operator $\A$ is slightly more involved and will be given in the next section.

\begin{proof}[Proof of Theorem \ref{thm1} for the operator $\mathcal A_1$]
Assume first $0<\gamma\le 2$ and $\beta>0$. By \eqref{e2000} we can write
\begin{align}
\A_1 = L + P, \label{decomp_1}
\end{align}
where $L= C_{\beta} \int_0^{\gamma} \tau^{\beta-1} |\nabla|^{\gamma-\tau} d\tau$ and $\| P\|_{L_x^1} <+\infty$.
Now take $\lambda_1> \nu \| P\|_{L_x^1}$ and define $f(t,x) = e^{-\lambda_1 t }\theta(t,x)$. Fix $T>0$ and consider
\begin{align*}
\sup_{0\le t \le T, \, x \in \mathbb R^d} |f(t,x)| = M>0.
\end{align*}
Without loss of generality we can assume
\begin{align*}
\sup_{0\le t \le T, \, x \in \mathbb R^d} f(t,x) =M>0.
\end{align*}
By using the decay condition \eqref{decay_cond} and a simple compactness argument in $t$, we conclude that
there exists $(t_0,x_0)$ such that
\begin{align*}
f(t_0,x_0) = M.
\end{align*}
We now show that $t_0=0$. Indeed if $0<t_0\le T$, we compute
\begin{align}
(\partial_t f)(t_0,x_0) &= -\lambda_1 f(t_0,x_0) - \nu (\A_1 f)(t_0,x_0) \notag \\
&= - \lambda_1 M - \nu (Lf)(t_0,x_0) - \nu (Pf)(t_0,x_0). \label{F12a}
\end{align}

Now by Corollary \ref{cor1}, we have
\begin{align}
\| Pf(t_0) \|_{L_x^\infty} \le \|P\|_{L_x^1} \| f(t_0) \|_{L_x^\infty} \le \|P\|_{L_x^1} \cdot M. \label{F12b}
\end{align}

For any $0<s<2$, by using the fractional representation
\begin{align*}
(|\nabla|^s g)(x) = C_s \lim_{\epsilon \to 0 } \int_{|y-x|> \epsilon} \frac {g(x) -g(y)} {|x-y|^{d+s} } \,dy,
\end{align*}
it is easy to see $(|\nabla|^s f)(t_0,x_0) \ge 0$ and hence
\begin{align}
(Lf)(t_0,x_0) \ge 0. \label{F12c}
\end{align}

Plugging \eqref{F12b} and \eqref{F12c} into \eqref{F12a}, we reach a
contradiction:
\begin{align*}
(\partial_t f)(t_0,x_0) < - (\lambda_1-\nu \|P\|_{L_x^1} ) M <0.
\end{align*}

Therefore we conclude that $t_0=0$ and clearly the estimate \eqref{emax_1} follows.

It remains to prove the case $\gamma=0$ and $\beta>0$. But in this case the operator
$\log^{-\beta} ( \lambda -\Delta)$ corresponds to an $L^1$-bounded convolution kernel.
Hence we just need to repeat the previous argument with $\A_1=P$ and $\lambda_1>\nu \|P\|_{L_x^1}$.
We omit the repetitive details.

\end{proof}
Finally we complete the
\begin{proof}[Proof of Theorem \ref{thm2} for the operator $\A_1$]
Without loss of generality we assume $\nu>0$, $0<\gamma\le 2$ and $\beta>0$. Let $1\le p<\infty$.
Multiplying both sides of \eqref{e1} by $|\theta|^{p-1} \text{sgn}(\theta)$, integrating by parts and
using the fact that $v$ is divergence free, we obtain
\begin{align}
\frac 1 p \frac {d} {dt} ( \|\theta(t) \|_p^p) & =  -\nu \int_{\mathbb R^d} (\A_1\theta )|\theta|^{p-1}
\text{sgn}(\theta)\,dx
\notag \\
& = - \nu \int_{\mathbb R^d} (L\theta) |\theta|^{p-1} \text{sgn}(\theta) \,dx - \nu \| (P\theta) |\theta|^{p-1} \|_{L_x^1},
\label{1010a}
\end{align}
where in the last equality we have used the decomposition \eqref{decomp_1}.

Since for any $0\le s < 2$, $1\le p<\infty$, we have
\begin{align*}
\int_{\mathbb R^d} (|\nabla|^s \theta)|\theta|^{p-1} \text{sgn}(\theta) \,dx \ge 0,
\end{align*}
it follows easily that
\begin{align*}
\int_{\mathbb R^d} (L \theta) |\theta|^{p-1} \text{sgn}(\theta) \,dx \ge 0.
\end{align*}

By H\"older, we have
\begin{align*}
\| (P\theta) |\theta|^{p-1} \|_{L_x^1} \le \| P\|_{L_x^1} \| \theta\|_{L_x^p}^p.
\end{align*}

Plugging the above estimates into \eqref{1010a} and integrating in time, we get
for any $1\le p <\infty$,
\begin{align*}
\| \theta(t) \|_{L_x^p}\le e^{ \nu \|P\|_{L_x^1} t } \| \theta_0 \|_{L_x^p}.
\end{align*}
The case $p=\infty$ follows by a limiting argument $p\to \infty$.
Clearly \eqref{emax_2} holds by setting $C=\nu \|P\|_{L_x^1}$.
\end{proof}

\section{Proof of the main theorems for the operator $\A$}
In this section we describe the proofs of the main theorems for the operator $\A$. We shall only
need to consider the case $1<\gamma \le 2$.  Thanks to Lemma \ref{lem1} the case $0\le \gamma \le 1$ is
already covered in the previous
section. In the case $1<\gamma \le 2$ we have to use the decomposition \eqref{e300} in Lemma \ref{lem1a}.  The extra
complication is due to the  negative term $-\lambda \tau |\nabla|^{\gamma-\tau-1}$ which in principle can cause
 the maximum principle to fail. The way out of this difficulty is to note that the main term $|\nabla|^{\gamma-\tau}$
 is stronger than this negative term by an order of $|\nabla|^{-1}$. The following lemma quantifies this
 observation. In some sense it gives the maximum principle for "mixed" operators.

 \begin{lem} \label{lem_11}
Let $0<s_1<s_2<2$ and $C_1>0$. Consider the operator
\begin{align*}
L=|\nabla|^{s_2} - C_1 |\nabla|^{s_1}.
\end{align*}
Then for any smooth function $g$ which attains its maximum at some point $x_0$, we have
\begin{align}
(Lg)(x_0) \ge - C_1 C_d \| g\|_{\infty}
 \cdot  (2-s_1)
\Bigl( 1 +  \bigl(  {s_2} (2-s_2)\bigr)^{-\frac{s_1}{s_2-s_1} } \Bigr).
  \label{1100a}
\end{align}
where $C_d>0$ is some constant depending only on the dimension $d$. In particular if $s_1=s_2-1$ and $1<s_2<2$, then we have
the estimate
\begin{align}
(Lg)(x_0) \ge -C_1 C_d^{\prime} \| g\|_{\infty} ( 1+ (2-s_2)^{1-s_2}), \label{1100b}
\end{align}
where $C_d^\prime>0$ is another constant depending only on the dimension $d$.
 \end{lem}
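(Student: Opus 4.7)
The plan is to work with the pointwise singular-integral representation
\begin{equation*}
|\nabla|^s g(x_0) \;=\; c_{d,s} \int_{\mathbb R^d} \frac{g(x_0)-g(x_0+y)}{|y|^{d+s}}\,dy,\qquad 0<s<2,
\end{equation*}
for which the integrand is pointwise nonnegative at a maximum point $x_0$. A standard computation via Euler's $\Gamma$-function shows that $c_{d,s}$ vanishes like $s(2-s)$ at both endpoints: uniformly in $s\in(0,2)$ one has $c_{d,s} \sim s(2-s)$, with implied constants depending only on $d$. In particular $c_{d,s_1}/s_1 \sim (2-s_1)$ and $c_{d,s_1}/c_{d,s_2} \sim s_1(2-s_1)/(s_2(2-s_2))$. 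These endpoint-degenerate asymptotics are exactly what will generate the prefactor $(2-s_1)$ and the blow-up $(s_2(2-s_2))^{-s_1/(s_2-s_1)}$ in the final bound.

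My first concrete step would be a pointwise ``interpolation at the maximum'' inequality obtained by splitting the integral defining $|\nabla|^{s_1}g(x_0)$ at an arbitrary cutoff radius $R>0$. For $|y|<R$ one writes $|y|^{-(d+s_1)} = |y|^{s_2-s_1}|y|^{-(d+s_2)} \le R^{s_2-s_1}|y|^{-(d+s_2)}$ and compares with $|\nabla|^{s_2}g(x_0)$; for $|y|>R$ one uses the crude bound $g(x_0)-g(x_0+y)\le 2\|g\|_\infty$ and integrates $|y|^{-(d+s_1)}$ in polar coordinates. The outcome is the master estimate
\begin{equation*}
|\nabla|^{s_1}g(x_0) \;\le\; \frac{c_{d,s_1}}{c_{d,s_2}}\,R^{s_2-s_1}\,|\nabla|^{s_2}g(x_0) \;+\; \frac{C_d\, c_{d,s_1}}{s_1 R^{s_1}}\,\|g\|_\infty.
\end{equation*}

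Next, I would substitute this into $(Lg)(x_0)=|\nabla|^{s_2}g(x_0)-C_1|\nabla|^{s_1}g(x_0)$, use $|\nabla|^{s_2}g(x_0)\ge 0$, and optimize in $R$ through two regimes. When $C_1 c_{d,s_1}\le c_{d,s_2}$, take $R=1$: the coefficient of $|\nabla|^{s_2}g(x_0)$ is then nonnegative and may be discarded, leaving $(Lg)(x_0)\ge -C_d C_1(c_{d,s_1}/s_1)\|g\|_\infty \sim -C_d C_1(2-s_1)\|g\|_\infty$, i.e., the ``$1$''-contribution inside the parentheses of \eqref{1100a}. In the complementary regime, choose $R^{s_2-s_1}=c_{d,s_2}/(C_1 c_{d,s_1})$, which exactly kills the coefficient of $|\nabla|^{s_2}g(x_0)$; the remainder is controlled by $-C_d C_1(2-s_1)(C_1 c_{d,s_1}/c_{d,s_2})^{s_1/(s_2-s_1)}\|g\|_\infty$, and the elementary inequality $s_1(2-s_1)\le 1$ together with $c_{d,s_2}\sim s_2(2-s_2)$ allows the extra factor to be absorbed into $(s_2(2-s_2))^{-s_1/(s_2-s_1)}$, producing the second contribution in \eqref{1100a}.

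Finally, \eqref{1100b} is a direct substitution: with $s_1=s_2-1$ we have $s_2-s_1=1$ and $s_1/(s_2-s_1)=s_2-1$, and $(2-s_1)=(3-s_2)$ is bounded on $(1,2)$; since $s_2\ge 1$ is bounded below, $(s_2(2-s_2))^{-(s_2-1)}$ is comparable to $(2-s_2)^{1-s_2}$, and both the factor $(3-s_2)$ and the hidden $s_2$-dependence get absorbed into a new dimensional constant $C_d'$. The main technical obstacle is the careful bookkeeping of the two endpoint-degenerate constants $c_{d,s_1}$ and $c_{d,s_2}$ through the $R$-optimization: the vanishing of $c_{d,s}$ at $s=2$ produces both the $(2-s_1)$ prefactor (via $c_{d,s_1}/s_1$) and, after inverting $R$, the singular factor $(s_2(2-s_2))^{-s_1/(s_2-s_1)}$, so any slack in these asymptotics or in the split between the two $C_1$-regimes would immediately destroy the exact exponents claimed in \eqref{1100a}.
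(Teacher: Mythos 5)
Your argument is correct and is essentially the paper's: both rest on the nonlocal integral representation of $|\nabla|^s$, the nonnegativity of the integrand at a maximum point, the endpoint asymptotics $C_{s,d}\sim_d s(2-s)$, and a split of the integral at a cutoff radius chosen so that the $|\nabla|^{s_2}$ piece dominates the $-C_1|\nabla|^{s_1}$ piece near $x_0$ while the far tail is bounded by $\|g\|_\infty$. Your ``master estimate'' with an explicit $R$-optimization is a slightly cleaner packaging of the paper's $\min\{1,C_2\}$ cutoff, and it inherits the same (harmless in the paper's application) imprecision, namely an extra $C_1^{s_1/(s_2-s_1)}$ factor that the paper quietly absorbs via the $\gtrsim_{C_1,d}$ notation.
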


\begin{proof}[Proof of Lemma \ref{lem_11}]
To begin we need to derive the explicit constant appearing
in the integral representation of the fractional operators $|\nabla|^s$ with $0<s<2$.
Recall that for any $0<\alpha<d$, the Riesz potential $|\nabla|^{-\alpha}$ has
the following explicit representation (cf. pp 117 of Stein \cite{Stein1})
\begin{align*}
(|\nabla|^{-\alpha} f) = c_{\alpha,d} \int_{\mathbb R^d} |x-y|^{-d+\alpha} f(y) \,dy,
\end{align*}
with
\begin{align*}
c_{\alpha,d} = \frac {\Gamma( \frac {d-\alpha} 2 ) } { \Gamma ( \frac{\alpha}2 ) 2^{\alpha} \pi^{\frac d 2} }.
\end{align*}

For $0<s<2$ by writing $|\nabla|^{s} = -\Delta |\nabla|^{-(2-s)}$ and integrating by parts, we get
\begin{align*}
(|\nabla|^s f ) = C_{s,d}  \lim_{\epsilon \to 0} \int_{|y-x|>\epsilon } \frac {f(x)-f(y)} {|x-y|^{d+s} } \,dy,
\end{align*}
where
\begin{align*} 
C_{s,d} = s \frac {\Gamma ( \frac {d+s} 2 ) } {2^{2-s} \Gamma( \frac{2-s} 2 ) \pi^{\frac d 2} }.
\end{align*}
By using the asymptotics $\Gamma(z) \sim z^{-1}$ for $z\sim 0$, it is easy to see that
\begin{align}
C_{s,d} \sim_d \frac {s} {\Gamma(\frac {2-s} 2 )} \sim_d s (2-s). \label{const_1}
\end{align}
and in particular for all $0<s<2$,
\begin{align}
C_{s,d} \lesssim_d 1, \label{const_3}
\end{align}

We now write
\begin{align}
\Bigl( (|\nabla|^{s_2} - C_1 |\nabla|^{s_1} ) g \Bigr)(x_0) = C_{s_2,d} \lim_{\epsilon \to 0}
\int_{|y-x_0|> \epsilon} \frac{g(x_0) -g(y) } { |x_0 - y|^{d+s_2} } \,dy \notag \\
-C_1 C_{s_1,d}  \lim_{\epsilon \to 0}
\int_{|y-x_0|> \epsilon} \frac{g(x_0) -g(y) } { |x_0 - y|^{d+s_1} } \,dy. \label{1030a}
\end{align}

Observe that $g(x_0)-g(y) \ge 0$ for all $y$. We now separate the $y$-integral into two regimes.
The first regime is $\{y: |y-x_0| \le \min \{1, C_2  \}\}$, where $C_2>0$ is a constant such that
(here we use \eqref{const_3} to bound $C_{s_1,d}$)
\begin{align*}
(C_2)^{s_1-s_2} C_{s_2,d}  \gtrsim_d C_1.
\end{align*}
By using \eqref{const_1}, we have (the notation $\sim_{C_1,d}$ means up a constant depending on $C_1$ and $d$)
\begin{align*}
C_2 \sim_{C_1,d} \Bigl(  {s_2 } (2-s_2)
\Bigr)^{\frac 1 {s_2-s_1}}.  
\end{align*}

In the first regime, it is easy to check that the first integral bounds the second integral in  \eqref{1030a}.
The second regime is just the complement $\{ y: |y-x_0|> \min \{ 1, C_2 \} \}$. In this case we simply discard
the first integral and bound the second integral by $\| g\|_{L_x^\infty}$ which produces a term of the form
(below $C_d$ denotes a constant depending only on the dimension $d$):
\begin{align*}
&- C_1  C_d\frac {s_1}{\Gamma(\frac {2-s_1} 2 )} \| g\|_{\infty} \int_{r> \min \{1, C_2 \} } r^{-1-s_1}\, dr \notag\\
\gtrsim_{C_1,d} &\quad -\| g\|_{\infty} \cdot  (2-s_1)
\Bigl( 1 +  \bigl(  {s_2} (2-s_2)\bigr)^{-\frac{s_1}{s_2-s_1} } \Bigr).
\end{align*}
This settles \eqref{1100a}.

Finally \eqref{1100b} is a simple consequence of \eqref{1100a}.
\end{proof}

The following corollary will be used in the proof of Theorem
\ref{thm1}.

\begin{cor}  \label{cor11}
Let  $d\ge 1$, $1< \gamma \le 2$ and $\beta> 0$, $\lambda>1$.
Then for any smooth function $g$ which attains its maximum at some point $x_0$, we have
\begin{equation*} 
\Bigl( \frac { \nb^{\gamma}} {\log^{\beta}(\lambda+|\nabla|)} g \Bigr) (x_0)
\ge - (C_{d,\beta,\gamma}  \lambda + \|P\|_{L_x^1} ) \| g\|_{\infty},
\end{equation*}
where $C_{d,\beta,\gamma}$ is some constant depending only on $(d,\alpha,\beta,\gamma)$,
and $P$ is the operator defined in \eqref{e300}.
\end{cor}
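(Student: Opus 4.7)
The plan is to insert the decomposition \eqref{e300} from Lemma \ref{lem1a} into $(\A g)(x_0)$ and obtain a pointwise lower bound for each of the three resulting pieces. Because $g$ attains its maximum at $x_0$, the basic input I would use throughout is the classical pointwise inequality $(|\nabla|^{s} g)(x_0) \ge 0$ for all $0 < s < 2$, obtained from the fractional-Laplacian integral representation, together with $(-\Delta g)(x_0) \ge 0$ at a smooth maximum.

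For the first piece $C_\beta \int_{\gamma-1}^{\gamma} \tau^{\beta-1} |\nabla|^{\gamma-\tau} g \, d\tau$, I would note that $\gamma - \tau \in [0, 1]$ with $\gamma - \tau > 0$ for almost every $\tau$, so the integrand at $x_0$ is a.e.\ nonnegative and this piece contributes $\ge 0$. For the smooth remainder $(Pg)(x_0)$, Young's convolution inequality gives the immediate bound $(Pg)(x_0) \ge -\|P\|_{L_x^1} \|g\|_\infty$.

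The main obstacle is the middle piece, which contains the negative first-order correction $-\lambda \tau |\nabla|^{\gamma-\tau-1}$. For each fixed $\tau \in (0, \gamma-1)$, I would set $s_2 = \gamma - \tau$ and $s_1 = \gamma - \tau - 1$, so that $s_2 - s_1 = 1$ and $1 < s_2 < 2$, and apply Lemma \ref{lem_11}, in particular \eqref{1100b}, with $C_1 = \lambda \tau$ to get
\begin{align*}
\bigl( |\nabla|^{s_2} g - \lambda \tau |\nabla|^{s_1} g \bigr)(x_0)
\ge - \lambda \tau \, C_d^{\prime} \, \|g\|_\infty \bigl( 1 + (2 - s_2)^{1 - s_2} \bigr).
\end{align*}
Multiplying by $C_\beta \tau^{\beta-1}$ and integrating over $[0, \gamma-1]$ would produce a bound of the form $-C_{d,\beta,\gamma} \lambda \|g\|_\infty$, provided the weight $\tau^{\beta} \bigl( 1 + (2-\gamma+\tau)^{1-\gamma+\tau} \bigr)$ is integrable on $[0, \gamma-1]$.

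The delicate point is precisely this integrability check in the borderline case $\gamma = 2$, which is where I expect the main difficulty: there $(2-\gamma+\tau)^{1-\gamma+\tau} = \tau^{\tau - 1}$ blows up like $\tau^{-1}$ as $\tau \to 0^+$, but when multiplied by $\tau^{\beta}$ one gets $\tau^{\beta - 1 + \tau} \lesssim \tau^{\beta - 1}$, which is integrable precisely because $\beta > 0$. The two values of $\tau$ at which Lemma \ref{lem_11} does not literally apply, namely $s_2 = 2$ (when $\gamma = 2$ and $\tau = 0$) and $s_1 = 0$ (when $\tau = \gamma - 1$), lie in a measure-zero set and are handled directly: at the former the prefactor $\lambda \tau$ vanishes and $-\Delta g(x_0) \ge 0$ takes over, while at the latter one uses $|\nabla|^1 g(x_0) \ge 0$ together with $g(x_0) \le \|g\|_\infty$. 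Summing the three contributions then yields the stated lower bound for $(\A g)(x_0)$.
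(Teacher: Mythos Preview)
Your proposal is correct and follows essentially the same approach as the paper: decompose via \eqref{e300}, discard the first integral by nonnegativity of $(|\nabla|^{\gamma-\tau}g)(x_0)$, bound $(Pg)(x_0)$ by Young, and apply \eqref{1100b} from Lemma~\ref{lem_11} with $s_2=\gamma-\tau$, $s_1=\gamma-\tau-1$, $C_1=\lambda\tau$ inside the integral over $[0,\gamma-1]$. Your integrability check at $\gamma=2$ and your handling of the endpoint values of $\tau$ are in fact more explicit than what the paper writes out.
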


\begin{proof}[Proof of Corollary \ref{cor11}]
By Lemma \ref{lem1a}, Lemma \ref{lem_11},
 and the fact that $(|\nabla|^{\gamma-\tau} g)(x_0)\ge 0$,
we have
\begin{align*}
&\Bigl( \frac { \nb^{\gamma}} {\log^{\beta}(\lambda+|\nabla|)} g \Bigr) (x_0)   \notag \\
\ge & \quad C_{\beta} \int_0^{\gamma-1} \tau^{\beta-1}
\Bigl( (|\nabla|^{\gamma-\tau} -\lambda \tau|\nabla|^{\gamma-\tau-1})g \Bigr)(x_0)  d\tau  - \| P\|_{L_x^1} \|g\|_\infty
\notag\\
\ge & \quad - C_{\beta,d} \cdot \lambda \|g\|_\infty
\int_0^{\gamma-1} \tau^{\beta} ( 1 + (2-\gamma+\tau)^{1-\gamma+\tau}
) d\tau  - \| P\|_{L_x^1} \|g\|_\infty
 \notag \\
\ge & -(C_{d,\beta,\gamma} \lambda +\|P\|_{L_x^1} ) \| g\|_{\infty}.
\end{align*}

\end{proof}

We are now ready to complete the
\begin{proof}[Proof of Theorem \ref{thm1} for the operator $\A$, case $1<\gamma\le 2$]
With the help of Corollary \ref{cor11}, the proof is similar to the proof for the operator
$\A_1$ in section 3, one only needs to consider $f(t,x)= e^{-\lambda_1 t} \theta(t,x)$ with
$\lambda_1> \nu (C_{d,\beta,\gamma} \lambda + \|P\|_{L_x^1} )$, where the constant $C_{d,\beta,\gamma}$
is the same as in Corollary \ref{cor11}. The rest of the proof is now the same as in Section 3.
We omit the details.
\end{proof}

Next we turn to the proof of Theorem \ref{thm2}. The following lemma establishes
a form of maximum principle for the mixed operator $L=|\nabla|^s- C_1 |\nabla|^{s-1}$ in
the $L^p$, $1\le p<\infty$ setting.

\begin{lem} \label{lem15}
Let $1<\gamma\le 2$, $1<s<\gamma$, $C_1>0$ and consider the operator
\begin{align*}
L = |\nabla|^s - C_1 (\gamma-s) |\nabla|^{s-1}.
\end{align*}
Then for any $1\le p<\infty$ and any smooth $g \in L^p$, we have the bound
\begin{align}
\int_{\mathbb R^d} (Lg) |g|^{p-1} \sgn(g) \,dx \ge - C_1 C_{d,\gamma}\| g\|_p^p,
\label{458a}
\end{align}
where $C_{d,\gamma}>0$ is some constant depending only on $(d,\gamma)$.
\end{lem}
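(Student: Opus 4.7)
The strategy is to use the Córdoba--Córdoba positivity of $\int(|\nabla|^s g)|g|^{p-1}\sgn(g)\,dx$ as a reservoir to absorb the short-range part of the negative lower-order term $-C_1(\gamma-s)|\nabla|^{s-1}$, and estimate the long-range part by Young's inequality. Set $\phi(x):=|g(x)|^{p-1}\sgn g(x)$; since $\phi$ is a non-decreasing function of $g$, the symmetrized bilinear representation
$$
\int(|\nabla|^\sigma g)\phi\,dx \;=\; \frac{C_{\sigma,d}}{2}\iint\frac{(g(x)-g(y))(\phi(x)-\phi(y))}{|x-y|^{d+\sigma}}\,dx\,dy\qquad(0<\sigma<2)
$$
has a pointwise non-negative integrand.

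First I would apply this with $\sigma=s$ and $\sigma=s-1$, and split the $|\nabla|^{s-1}$ double integral at a radius $R>0$ to be chosen. On the near region $\{|x-y|<R\}$, the elementary bound $|x-y|^{-(d+s-1)}\le R\,|x-y|^{-(d+s)}$ combined with the non-negativity of the integrand gives
$$
\text{(near)}\le \frac{C_{s-1,d}}{C_{s,d}}\,R\int(|\nabla|^s g)\phi\,dx.
$$
On the complementary region $\{|x-y|\ge R\}$, I would use $|g(x)-g(y)|\le|g(x)|+|g(y)|$ and $|\phi(x)-\phi(y)|\le|\phi(x)|+|\phi(y)|$ and apply Young's inequality to the convolution with the radial kernel $|z|^{-d-s+1}\chi_{|z|\ge R}$, whose $L^1$ norm equals $\omega_{d-1}R^{1-s}/(s-1)$ (finite precisely because $s>1$). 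Combined with Hölder's $\|g\phi\|_{L^1}\le\|g\|_p\|\phi\|_{p'}=\|g\|_p^p$, this yields
$$
\text{(far)}\lesssim_d \frac{C_{s-1,d}}{s-1}\,R^{1-s}\,\|g\|_p^p.
$$

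Next I would pick $R$ so that $C_1(\gamma-s)(C_{s-1,d}/C_{s,d})R=1$. With this choice the near contribution of the negative term in $\int(Lg)\phi\,dx$ is offset exactly by the positive $\int(|\nabla|^s g)\phi\,dx$, leaving
$$
\int(Lg)\phi\,dx \;\ge\; -C_1(\gamma-s)\cdot\frac{C_d\,C_{s-1,d}}{s-1}\,R^{1-s}\,\|g\|_p^p.
$$
Substituting $R=C_{s,d}/[C_1(\gamma-s)C_{s-1,d}]$ and invoking the asymptotics $C_{\sigma,d}\sim_d\sigma(2-\sigma)$ from \eqref{const_1}, the right-hand coefficient reduces, up to a factor depending only on $d$, to a quantity of size
$$
C_1\cdot(\gamma-s)^s(s-1)^{s-1}(3-s)^s\,[s(2-s)]^{-(s-1)}.
$$

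The main (and essentially only) remaining obstacle is to verify that this quantity is uniformly bounded in $s\in(1,\gamma)$ by a constant depending only on $(d,\gamma)$. At the lower endpoint $s\to 1^+$ one has $(s-1)^{s-1}\to 1$ while every other factor stays finite. At $s\to 2^-$, which can arise only when $\gamma=2$, the potentially singular factor $[s(2-s)]^{-(s-1)}$ is rescued precisely by $(\gamma-s)^s=(2-s)^s$, since the ratio $(2-s)^s/(2-s)^{s-1}=2-s\to 0$. When $\gamma<2$ the inequality $2-s\ge 2-\gamma>0$ keeps $[s(2-s)]^{-(s-1)}$ bounded in terms of $\gamma$ alone. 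Assembling these observations produces the required constant $C_{d,\gamma}$ and the bound \eqref{458a} follows.
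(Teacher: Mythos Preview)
Your proof is correct and follows essentially the same route as the paper: symmetrize to obtain the non-negative integrand $h(x,y)=(g(x)-g(y))(\phi(x)-\phi(y))$, split the $|\nabla|^{s-1}$ double integral at a radius, absorb the near part into the positive $|\nabla|^{s}$ term, and bound the far part via a Young-type tail estimate. The only tactical difference is your choice of radius: you pick $R$ so that the near contribution is cancelled \emph{exactly}, which then obliges you to carry out the endpoint analysis in $s$ of the resulting coefficient; the paper instead notices that $s(2-s)\ge (s-1)(\gamma-s)$ for every $1<s<\gamma\le 2$, so a \emph{fixed} radius $A=C_d/C_d'$ depending only on $d$ already makes the near part non-negative, and the far bound $-C_d'A^{1-s}(\gamma-s)\|g\|_p^p$ is then trivially uniform in $s$ without any limit discussion. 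One minor slip: after substituting your $R$, the power of $C_1$ in the final coefficient is actually $C_1^{\,s}$ rather than $C_1$ (the paper sidesteps this by normalizing $C_1=1$ at the outset).
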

\begin{proof}[Proof of Lemma \ref{lem15}]
The proof is analogous to that of Corollary \ref{cor11} with suitable modifications.
Without loss of generality we assume $C_1=1$.
We begin with a simple tail estimate. For any $A>0$, we have
\begin{align}
&  (s-1)  \int_{\mathbb R^d} \int_{|y-x|>A}
  \frac{|f(x)-f(y)|}{|x-y|^{d+s-1}} (|f(x)|^{p-1}+|f(y)|^{p-1})  \,dx \,dy\notag \\
\le & C_d \| f\|_p^p \cdot A^{1-s}, \label{tmp504}
\end{align}
where $C_d>0$ is a constant depending only on the dimension $d$.
The estimate \eqref{tmp504} is a simple consequence of the Young's inequality
\begin{align*}
|f(y)| \cdot |f(x)|^{p-1} \le \frac{p-1} p |f(x)|^p + \frac 1 p |f(y)|^p,
\end{align*}
and Fubini.

Next we need to transform the LHS of \eqref{458a} suitably.
By a symmetrization in the variable $x$ and $y$, we have
\begin{align}
&\int_{\mathbb R^d} (|\nabla|^s  g) |g|^{p-1} \sgn(g) \,dx \notag \\
 =&
C_{s,d} \int \int \frac {g(x)-g(y)} {|x-y|^{d+s}}dy |g(x)|^{p-1} \sgn(g(x)) \,dx \notag \\
 =& \frac {C_{s,d}}2 \int \int \frac{h(x,y) }
{|x-y|^{d+s}} \,dx \,dy, \label{521a}
\end{align}
where
\begin{align*}
h(x,y)=(g(x)-g(y))\Bigl (|g(x)|^{p-1} \sgn(g(x)) - |g(y)|^{p-1} \sgn(g(y)) \Bigr).
\end{align*}
Note that for any real numbers $a,b$, we have
\begin{align*}
(a-b)\Bigl( |a|^{p-1} \sgn(a) -|b|^{p-1} \sgn(b) \Bigr) \ge 0.
\end{align*}
Therefore $h(x,y) \ge 0$ for all $x,y$.
The advantage of the expression \eqref{521a} is that the integrand is always non-negative.
Similar expression holds for the operator $|\nabla|^{s-1}$.

Let $A>0$ be a constant whose value will be specified later. By  using \eqref{521a} and \eqref{const_1},
 we have
\begin{align}
\text{LHS of \eqref{458a}} & \ge C_d s(2-s) \int_{\mathbb R^d} \int_{|y-x|<A} \frac {h(x,y)}
{|x-y|^{d+s}} \,dx \,dy \label{535a}\\
& \qquad - C_d^{\prime} (s-1) (\gamma-s) \int_{\mathbb R^d}\int_{|y-x|<A} \frac {h(x,y)}
{|x-y|^{d+s-1}} \,dx \,dy \label{535b}\\
& \qquad - C_d^{\prime} (s-1) (\gamma-s) \int_{\mathbb R^d} \int_{|y-x|>A} \frac{h(x,y)}{|x-y|^{d+s-1}} \,dx \,dy, \label{535c}
\end{align}
where $C_d$ and $C_d^\prime$ are constants depending only on the dimension $d$.

By \eqref{tmp504}, we have
\begin{align}
\eqref{535c} \ge - C_d^{\prime} \| g\|_p^p A^{1-s} (\gamma-s). \label{535cc}
\end{align}

On the other hand, by choosing $A=C_d/C^{\prime}_d$, we have
\begin{align}
C_d \cdot s (2-s) \cdot \frac 1 A \ge C_d^{\prime} (s-1) (\gamma-s),  \notag 
\end{align}
and
\begin{align*}
\eqref{535a}+ \eqref{535b} \ge 0.
\end{align*}
Substituting the value of $A$ into \eqref{535cc}, we obtain \eqref{458a}.

\end{proof}
Finally we are ready to complete the

\begin{proof}[Proof of Theorem \ref{thm2} for the operator $\A$ in the regime $1<\gamma\le 2$]
Thanks to Lemma \ref{lem1a} and Lemma \ref{lem15}, we essentially only have to repeat the proof for the operator
$\A_1$ in Section 3. In place of \eqref{1010a}, we have
\begin{align*}
\frac 1 p \frac d {dt} ( \| \theta(t) \|_p^p )
\le \nu ( \| P \|_{L_x^1} + \lambda C_{d,\gamma,\beta} ) \| \theta (t) \|_p^p.
\end{align*}

The estimate \eqref{emax_2} follows immediately.

\end{proof}

\end{document}